
\documentclass[12pt]{amsart}  
\usepackage{amsmath,amssymb,amsfonts}  
\usepackage{graphicx}
\usepackage{float}


\newtheorem{theorem}[equation]{Theorem}

\newtheorem{lemma}[equation]{Lemma}

\newtheorem{remark}[equation]{Remark}


\numberwithin{equation}{section}

\setlength{\paperwidth}{210mm} \setlength{\paperheight}{297mm}
\setlength{\oddsidemargin}{0mm} \setlength{\evensidemargin}{0mm}
\setlength{\topmargin}{-20mm} \setlength{\headheight}{10mm}
\setlength{\headsep}{13mm} \setlength{\textwidth}{160mm}
\setlength{\textheight}{240mm} \setlength{\footskip}{15mm}
\setlength{\marginparwidth}{0mm} \setlength{\marginparsep}{0mm}

\title{Shafer-Fink type inequalities for arc lemniscate functions} 

\author{Minjie Wei} 
\address{\textbf{Minjie Wei}\newline
School of Science, Zhejiang Sci-Tech University, Hangzhou 310018, China}
\email{minjiewei\_zstu@163.com} 

\author{Yue He} 
\address{\textbf{Yue He}\newline
School of Science, Zhejiang Sci-Tech University, Hangzhou 310018, China}
\email{yuehe\_zstu@163.com} 

\author{Gendi Wang*}
\address{\textbf{Gendi Wang} ({\rm *Corresponding author})\newline
School of Science, Zhejiang Sci-Tech University, Hangzhou 310018, China}
\email{gendi.wang@zstu.edu.cn }


\begin{document}  

\newcounter{minutes}\setcounter{minutes}{\time}
\divide\time by 60
\newcounter{hours}\setcounter{hours}{\time}
\multiply\time by 60 \addtocounter{minutes}{-\time}
\def\thefootnote{}
\footnotetext{ {\tiny File:~\jobname.tex,
          printed: \number\year-\number\month-\number\day,
          \thehours.\ifnum\theminutes<10{0}\fi\theminutes }}
\makeatletter\def\thefootnote{\@arabic\c@footnote}\makeatother

\maketitle  

\begin{abstract}
In this paper, we investigate the monotonicity and inequalities for some functions involving the arc lemniscate and the hyperbolic arc lemniscate functions. In particular, sharp Shafer-Fink type inequalities for the arc lemniscate and the hyperbolic arc lemniscate functions are proved.
\end{abstract}

{\small \sc Keywords.} {arc lemniscate functions, hyperbolic arc lemniscate functions, lemniscate functions, hyperbolic lemniscate functions, Shafer-Fink type inequalities}

{\small \sc Mathematics Subject Classification~(2010).} {26D07, 33E05}


\section{Introduction}

The arc lemniscate sine function and the hyperbolic arc lemniscate sine function are defined as follows \cite[p.259]{bb}:
$${\rm arcsl}\,x=\int_0^x\frac{\mathrm{d} t}{\sqrt {1-t^4}},\quad\,|x|\le 1$$
and
$${\rm arcslh}\,x=\int_0^x\frac{\mathrm{d} t}{\sqrt {1+t^4}},\quad\,x\in \mathbb{R},$$
respectively.
The limiting values of the above two functions are \cite[Theorem 1.7]{bb}
\begin{equation*}
\omega={\rm arcsl}(1)=\frac 1{\sqrt 2}\mathcal{K}\left(\frac 1{\sqrt 2}\right)=\frac{\Gamma^2(1/4)}{4\sqrt{2\pi}}\approx 1.31103
\end{equation*}
and
\begin{equation*}
K={\rm arcslh}(+\infty)=\sqrt{2}\,\omega\approx 1.85407,
\end{equation*}
where
\begin{equation*}
\mathcal{K}(r)=\int_0^{\frac {\pi}2}\frac{{\rm d}\,\theta}{\sqrt{1-r^2\,\sin^2\theta}}=\int_0^1\frac{{\rm d}\,t}{\sqrt{(1-t^2)(1-r^2\,t^2)}},\quad 0<r<1
\end{equation*}
is the complete elliptic integral of the first kind.
The arc lemniscate sine function ${\rm arcsl}\,x$ shows the arc length of the lemniscate $r^2=\cos 2\theta$ from the origin to the point with radial position $x$.
The arc lemniscate sine function and the hyperbolic arc lemniscate sine function are the generalized $(2,4)$-trigonometric sine and $(2,4)$-hyperbolic sine functions \cite{t}, respectively. The generalized $(p,q)$-trigonometric and hyperbolic functions are related to the $(p,q)$-eigenvalue problem of $p$-Laplacian, which attracts many researchers' attention \cite{bv,dm,kvz,le,t}.

The arc lemniscate tangent function and the hyperbolic arc lemniscate tangent function are defined in terms of the arc lemniscate sine function and the hyperbolic arc lemniscate sine function, respectively \cite[(3.5)(3.6)]{n1}:
\begin{equation*}
\mathrm{arctl}\,x=\mathrm{arcsl}\left(\frac{x}{\sqrt[4]{1+x^4}}\right),\qquad x\in\mathbb{R}
\end{equation*}
and
$${\rm arctlh}\,x={\rm arcslh}\,\left(\frac{x}{\sqrt[4]{1-x^4}}\right),\qquad |x|<1.$$

The inverses of the above four arc lemniscate functions,
{\it the lemniscate sine function ${\rm sl}$,
the hyperbolic lemniscate sine function ${\rm slh}$,
the lemniscate tangent function ${\rm tl}$,
and the hyperbolic lemniscate tangent function ${\rm tlh}$},
have the following relations \cite[(2.11)(2.12)]{n2}:
\begin{equation}\label{rel1}
\mathrm{tl}\,x=\frac{\mathrm{sl}\,x}{\sqrt[4]{1-\mathrm{sl}^4 x}},\qquad |x|< \omega
\end{equation}
and
\begin{equation}\label{rel2}
\mathrm{tlh}\,x=\frac{\mathrm{slh}\,x}{\sqrt[4]{1+\mathrm{slh}^4 x}},\qquad |x|< K.
\end{equation}

\medskip


In 1966, Shafer proposed the following inequality \cite{s1}
\begin{equation*}
\arctan\,x > \frac{3\,x}{1+2\,\sqrt{1+x^2}},\quad\,x> 0,
\end{equation*}
which was solved next year \cite{sgmk}.
In 2011 , Chen, Cheung and Wang \cite{ccw} found the best possible numbers $b,\,c$ for the following inequalities for every $a>0$
\begin{equation*}
\frac{b\,x}{1+a\,\sqrt{1+x^2}}\le \arctan\,x \le \frac{c\,x}{1+a\,\sqrt{1+x^2}},\quad\,x\ge 0.
\end{equation*}

Fink \cite{f} found the upper bound and Mortici \cite{m1} the lower bound for the arc sine function as follows:
\begin{equation*}
\frac{3\,x}{1+ \sqrt{1-x^2}}\le \arcsin\,x \le \frac{\pi\,x}{2+\,\sqrt{1-x^2}},\quad\,0\le x\le 1.
\end{equation*}
For more refinements and extensions of such kind of inequalities for trigonometric and hyperbolic functions and other related functions, the reader is referred to \cite{dc,kvz,m,pz,qg}.

In this paper, we continue the study of the so-called Shafer-Fink type inequalities for the arc lemniscate  functions. Specifically, we try to find the best possible numbers $\alpha,\,\beta$  e.g., for the arc lemniscate sine function:
\begin{equation}\label{dingli1.3}
\frac{1}{\alpha+(1-\alpha)\sqrt{1-x^4}}<\frac{\mathrm{arcsl}\,x}{x}<\frac{1}{\beta+(1-\beta)\sqrt{1-x^4}},\quad\,0<|x|<1.
\end{equation}

\medskip

Our results are stated in the following two theorems.

\begin{theorem}\label{sf1}
The following inequalities hold:
\begin{equation}\label{arcsl4}
\frac{\omega}{1+(\omega-1)\sqrt[4]{1-x^4}}<\frac{{\rm arcsl}\,x}{x}<\frac{5}{3+2\,\sqrt[4]{1-x^4}},\quad\,0<|x|<1,
\end{equation}
\begin{equation}\label{arcslh4}
\frac{\sqrt{2}\,\omega}{(\sqrt{2}\,\omega-1)+\sqrt[4]{1+x^4}}<\frac{{\rm arcslh}\,x}{x}<\frac{5}{3+2\,\sqrt[4]{1+x^4}},\quad\,|x|>0,
\end{equation}
\begin{equation}\label{arctl4}
\frac{\omega}{(\omega-1)+\sqrt[4]{1+x^4}}<\frac{{\rm arctl}\,x}{x}<\frac{5}{2+3\,\sqrt[4]{1+x^4}},\quad\,|x|>0,
\end{equation}
\begin{equation}\label{arctlh4}
\frac{\sqrt{2}\,\omega}{1+\left(\sqrt{2}\,\omega-1\right)\sqrt[4]{1-x^4}}<\frac{{\rm arctlh}\,x}{x}<\frac{5}{2+3\,\sqrt[4]{1-x^4}},\quad\,0<|x|<1.
\end{equation}
Moreover, all the constants in the inequalities are the best possible in the sense of the form of \eqref{dingli1.3}.
\end{theorem}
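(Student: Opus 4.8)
The plan is to prove \eqref{arcsl4} and \eqref{arcslh4} directly and to obtain \eqref{arctl4} and \eqref{arctlh4} from them. For the latter deduction, observe that the substitution $u=x/\sqrt[4]{1+x^4}$, an increasing bijection of $(0,\infty)$ onto $(0,1)$, satisfies $1-u^4=1/(1+x^4)$, so $\mathrm{arctl}\,x=\mathrm{arcsl}\,u$ and
\[
\frac{\mathrm{arctl}\,x}{x}=\frac{\mathrm{arcsl}\,u}{u}\,\sqrt[4]{1-u^4},\qquad \sqrt[4]{1-u^4}=\frac{1}{\sqrt[4]{1+x^4}}\,;
\]
multiplying \eqref{arcsl4} (written at the point $u$) by $\sqrt[4]{1-u^4}$ and then dividing numerator and denominator by it turns it into precisely \eqref{arctl4}. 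Likewise $v=x/\sqrt[4]{1-x^4}$ satisfies $1+v^4=1/(1-x^4)$ and carries \eqref{arcslh4} into \eqref{arctlh4}. As these substitutions are bijections, the sharpness of the constants transfers along with the inequalities, so it suffices to establish \eqref{arcsl4} and \eqref{arcslh4}.

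For \eqref{arcsl4} I would introduce
\[
h_1(x)=\frac{x/\mathrm{arcsl}\,x-\sqrt[4]{1-x^4}}{\,1-\sqrt[4]{1-x^4}\,},\qquad x\in(0,1),
\]
so that $x/\mathrm{arcsl}\,x=h_1(x)+\sqrt[4]{1-x^4}\,\bigl(1-h_1(x)\bigr)$. A short manipulation shows that \eqref{arcsl4} is equivalent to the two-sided bound $3/5<h_1(x)<1/\omega$ on $(0,1)$, and that $\omega$ and $5$ are best possible in the sense of \eqref{dingli1.3} exactly when $3/5$ and $1/\omega$ are, respectively, the infimum and supremum of $h_1$. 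I would establish both by proving that \emph{$h_1$ is strictly increasing on $(0,1)$}. Writing $h_1=g_1/g_2$ with $g_1(x)=x/\mathrm{arcsl}\,x-\sqrt[4]{1-x^4}$ and $g_2(x)=1-\sqrt[4]{1-x^4}$, both vanishing at $x=0$, the monotone form of l'Hôpital's rule reduces this to the monotonicity of
\[
\frac{g_1'(x)}{g_2'(x)}=1+\frac{(1-x^4)^{3/4}}{x^3}\,\frac{d}{dx}\!\left(\frac{x}{\mathrm{arcsl}\,x}\right),
\]
which, in the variable $w=\sqrt[4]{1-x^4}$, follows from the statement that $x/\mathrm{arcsl}\,x$ is a \emph{convex} function of $w$ on $(0,1)$. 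Using $\frac{d}{dx}\mathrm{arcsl}\,x=(1-x^4)^{-1/2}$ and computing the second derivative, this convexity turns out to be equivalent to the inequality
\[
(\ast)\qquad x\,\mathrm{arcsl}\,x+\sqrt{1-x^4}\,\bigl(2x^2-3(\mathrm{arcsl}\,x)^2\bigr)>0,\qquad x\in(0,1).
\]
The proof of \eqref{arcslh4} is parallel, with $h_2(x)=(x/\mathrm{arcslh}\,x-\sqrt[4]{1+x^4})/(1-\sqrt[4]{1+x^4})$ on $(0,\infty)$: here \eqref{arcslh4} is equivalent to $1-1/(\sqrt2\,\omega)<h_2(x)<3/5$, the required monotonicity of $h_2$ (now strictly decreasing) follows from convexity of $x/\mathrm{arcslh}\,x$ as a function of $\sqrt[4]{1+x^4}$, and that reduces to $x\,\mathrm{arcslh}\,x+\sqrt{1+x^4}\,\bigl(2x^2-3(\mathrm{arcslh}\,x)^2\bigr)>0$ for $x>0$.

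The step I expect to be the main obstacle is $(\ast)$ together with its hyperbolic counterpart. The difficulty is that $\mathrm{arcsl}\,x/x$ already agrees with the bound $5/(3+2\sqrt[4]{1-x^4})$ to order $x^4$ at the origin, so the left-hand side of $(\ast)$ vanishes there to high order — one finds it equals $\tfrac{14}{75}\,x^{10}+\tfrac{43}{325}\,x^{14}+\cdots$ — while near $x=1$ it tends to $\omega>0$, so no crude majorization can bridge this gap. I would prove $(\ast)$ by inserting the series $\mathrm{arcsl}\,x=\sum_{n\ge0}\frac{(1/2)_n}{n!\,(4n+1)}\,x^{4n+1}$ and reducing it to the statement that the coefficients of the resulting power series in $x^4$ are all nonnegative, which can be handled by a Kaluza-type condition on the Taylor coefficients of $\mathrm{arcsl}$, or, if that does not suffice, by a second application of the monotone l'Hôpital rule after splitting $(0,1)$ at the point where the auxiliary quantity $3\sqrt{1-x^4}\,\mathrm{arcsl}\,x-x$ changes sign. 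Once $(\ast)$ and its analogue are in hand, the constants come from the endpoint limits: from $\mathrm{arcsl}\,x=x+x^5/10+\cdots$ and $\mathrm{arcsl}(1)=\omega$ one gets $h_1(0^+)=3/5$ and $h_1(1^-)=1/\omega$, and from $\mathrm{arcslh}\,x=x-x^5/10+\cdots$ and $\mathrm{arcslh}(+\infty)=\sqrt2\,\omega$ one gets $h_2(0^+)=3/5$ and $h_2(+\infty)=1-1/(\sqrt2\,\omega)$. Combined with the monotonicity of $h_1$ and $h_2$, these limits give the strict inequalities \eqref{arcsl4} and \eqref{arcslh4} together with their sharpness, and the reduction of the first paragraph then yields \eqref{arctl4} and \eqref{arctlh4} and their sharpness.
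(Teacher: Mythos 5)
Your reduction of \eqref{arctl4} and \eqref{arctlh4} to \eqref{arcsl4} and \eqref{arcslh4} via $u=x/\sqrt[4]{1+x^4}$ is correct and is actually an economy over the paper, which proves all four inequalities independently. The problem is the other half of your argument: everything has been funnelled into the inequality $(\ast)$ (and its hyperbolic twin), and $(\ast)$ is never proved. You only list candidate strategies (``a Kaluza-type condition \dots or, if that does not suffice, \dots'') without carrying either out, and, as you yourself observe, the inequality is extremely delicate --- its left side is $\tfrac{14}{75}x^{10}+O(x^{14})$ near the origin, so no crude majorization can work. As written, this is a genuine gap at the crux of the proof, not a routine verification.

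The gap is avoidable, and the culprit is your choice of denominator. You normalize by $1-\sqrt[4]{1-x^4}$, i.e.\ you study
\[
h_1(x)=\frac{x-\sqrt[4]{1-x^4}\,\mathrm{arcsl}\,x}{\bigl(1-\sqrt[4]{1-x^4}\bigr)\,\mathrm{arcsl}\,x},
\]
and this is what forces the second-order computation that produces $(\ast)$. The paper instead studies
\[
g_1(x)=\frac{x-\sqrt[4]{1-x^4}\,\mathrm{arcsl}\,x}{\mathrm{arcsl}\,x-x},
\]
which carries exactly the same information: the two denominators differ by precisely the common numerator, so $h_1=g_1/(1+g_1)$, and your target bounds $3/5<h_1<1/\omega$ are equivalent to $3/2<g_1<1/(\omega-1)$. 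For $g_1$, two applications of the monotone l'H\^opital rule collapse to
\[
\frac{g_{11}''(x)}{g_{12}''(x)}=\frac{3}{2}\,\frac{\mathrm{arcsl}\,x}{x}\,\frac{1}{\sqrt[4]{1-x^4}},
\]
a product of two manifestly positive increasing factors, so no analogue of $(\ast)$ is needed and the endpoint limits $3/2$ and $1/(\omega-1)$ deliver the sharp constants; the hyperbolic case is identical with $\frac{3}{2}\,\frac{\mathrm{arcslh}\,x}{x}\,(1+x^4)^{-1/4}$ decreasing. I recommend you either switch to this normalization (your first paragraph then still disposes of the tangent cases) or supply an actual proof of $(\ast)$ and its counterpart; until one of these is done the argument is incomplete.
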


\medskip

\begin{theorem}\label{sf2}
The following inequalities hold:
\begin{equation}\label{arcsl2}
\frac{5}{4+\sqrt{1-x^4}}<\frac{{\rm arcsl}\,x}{x}<\frac{\omega}{1+\left(\omega-1\right)\sqrt{1-x^4}},\quad\,0<|x|<1,
\end{equation}
\begin{equation}\label{arcslh2}
\frac{5}{4+\sqrt{1+x^4}}<\frac{{\rm arcslh}\,x}{x},\quad\,|x|>0,
\end{equation}
\begin{equation}\label{arctl2}
\frac{10}{7+3\sqrt{1+x^4}}<\frac{{\rm arctl}\,x}{x},\quad\,|x|>0,
\end{equation}
\begin{equation}\label{arctlh2}
\frac{10}{7+3\,\sqrt{1-x^4}}<\frac{{\rm arctlh}\,x}{x}<\frac{\sqrt{2}\,\omega}{1+\left(\sqrt{2}\,\omega-1\right)\sqrt{1-x^4}}\quad\,0<|x|<1.
\end{equation}
Moreover, all the constants in the inequalities are the best possible in the sense of the form of \eqref{dingli1.3}.
\end{theorem}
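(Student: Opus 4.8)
The plan is to treat the four inequalities of Theorem~\ref{sf2} uniformly. Each has the shape $\dfrac1{\alpha+(1-\alpha)g(x)}<\dfrac{\Phi(x)}{x}<\dfrac1{\beta+(1-\beta)g(x)}$ of \eqref{dingli1.3}, where $\Phi$ is one of ${\rm arcsl},{\rm arcslh},{\rm arctl},{\rm arctlh}$ and $g(x)=\sqrt{1-x^4}$ (for ${\rm arcsl}$ and ${\rm arctlh}$, on $(0,1)$) or $g(x)=\sqrt{1+x^4}$ (for ${\rm arcslh}$ and ${\rm arctl}$, on $(0,\infty)$). Since $\alpha+(1-\alpha)g=g+\alpha(1-g)$, inverting shows that such an inequality is equivalent to a bound on the auxiliary function
\[
f_\Phi(x):=\frac{x/\Phi(x)-g(x)}{1-g(x)},
\]
namely $\beta<f_\Phi(x)<\alpha$ when $1-g(x)>0$ (the ${\rm arcsl}$ and ${\rm arctlh}$ cases), and, after the sign reversal, $\alpha<f_\Phi(x)<\beta$ when $1-g(x)<0$ (the ${\rm arcslh}$ and ${\rm arctl}$ cases). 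Thus the whole theorem reduces to the assertion that \emph{each $f_\Phi$ is strictly monotone} on its interval; the best constants are then its one-sided limits at the two endpoints, and the inequalities are strict because these limits are not attained. When one endpoint limit equals $1$ the corresponding bound is only $\Phi(x)/x\lessgtr1$, true but trivial from $\Phi(x)\lessgtr x$, which is why \eqref{arcslh2} and \eqref{arctl2} are one-sided.

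To prove the monotonicity, I would write $f_\Phi=p/q$ with $p(x)=x-g(x)\Phi(x)$, $q(x)=(1-g(x))\Phi(x)$; both vanish at $0$, so by the monotone form of l'Hôpital's rule it is enough that $p'/q'$ be monotone. Here the clean derivative formulas are decisive: ${\rm arcsl}'(x)=(1-x^4)^{-1/2}$ and ${\rm arcslh}'(x)=(1+x^4)^{-1/2}$ are immediate, and differentiating the defining relations for ${\rm arctl}$ and ${\rm arctlh}$ by the chain rule yields the equally clean ${\rm arctl}'(x)=(1+x^4)^{-3/4}$ and ${\rm arctlh}'(x)=(1-x^4)^{-3/4}$. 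For ${\rm arcsl}$ one then gets $p'(x)=2x^3(1-x^4)^{-1/2}{\rm arcsl}\,x$ and, using $1-\sqrt{1-x^4}=x^4/(1+\sqrt{1-x^4})$,
\[
\frac{p'(x)}{q'(x)}=\Bigl(1+\frac1{2\,m(x)}\Bigr)^{-1},\qquad m(x):=\bigl(1+\sqrt{1-x^4}\bigr)\,\frac{{\rm arcsl}\,x}{x}.
\]
A short computation gives $x^2\sqrt{1-x^4}\,m'(x)=\bigl(1+\sqrt{1-x^4}\bigr)\bigl[x-{\rm arcsl}\,x\,(2-\sqrt{1-x^4})\bigr]$, which is negative since ${\rm arcsl}\,x>x$ and $2-\sqrt{1-x^4}>1$ on $(0,1)$. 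So $m$ decreases, $p'/q'$ decreases, $f_{\rm arcsl}$ decreases, and \eqref{arcsl2} follows. The ${\rm arcslh}$ argument is parallel, the reduction being to the sign of $x-{\rm arcslh}\,x\,(2-\sqrt{1+x^4})$, which is positive either because $2-\sqrt{1+x^4}\le0$ or, when $0<2-\sqrt{1+x^4}<1$ (i.e.\ $x<\sqrt[4]{3}$), because ${\rm arcslh}\,x<x<x/(2-\sqrt{1+x^4})$; hence $f_{\rm arcslh}$ increases and \eqref{arcslh2} follows.

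The ${\rm arctl}$ and ${\rm arctlh}$ cases carry the real difficulty. Because ${\rm arctl}'(x)=(1+x^4)^{-3/4}$ rather than $(1+x^4)^{-1/2}$, the quotient $p'/q'$ does not collapse to a single factor $\bigl(1+\tfrac1{2m}\bigr)^{-1}$; instead one finds $\dfrac{p'}{q'}=1-\dfrac{(1+x^4)^{3/4}-1}{2(1+x^4)^{1/4}x^3\,{\rm arctl}\,x+\sqrt{1+x^4}-1}$, a ratio of two functions each vanishing at $0$. I would apply the monotone l'Hôpital rule a second time, reducing the claim to the monotonicity of an explicit expression such as $\dfrac{2\sqrt{1+x^4}\,{\rm arctl}\,x}{x}+\dfrac{2x^3\,{\rm arctl}\,x}{3\sqrt{1+x^4}}+\dfrac4{3(1+x^4)^{1/4}}$, and then, after at most one further reduction, to an elementary but genuinely nontrivial inequality relating ${\rm arctl}\,x$ to a radical-rational function of $x$. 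I expect this to be settled by clearing denominators and running a sign analysis that separates the ranges in which the auxiliary quantities change sign (in particular across $x=\sqrt[4]{3}$, exactly as in the ${\rm arcslh}$ case) and uses the crude bounds ${\rm arctl}\,x<x$, ${\rm arctlh}\,x>x$; making this reduction precise and disposing of the resulting inequality is the main obstacle, the rest being bookkeeping. The ${\rm arctlh}$ case runs identically with $\sqrt{1-x^4}$ in place of $\sqrt{1+x^4}$, and yields \eqref{arctlh2}.

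It remains to identify the best constants, which I read off from the endpoint limits of the now-monotone $f_\Phi$. From the expansions ${\rm arcsl}\,x=x+\tfrac1{10}x^5+\cdots$, ${\rm arcslh}\,x=x-\tfrac1{10}x^5+\cdots$ and, via the defining relations, ${\rm arctl}\,x=x-\tfrac3{20}x^5+\cdots$, ${\rm arctlh}\,x=x+\tfrac3{20}x^5+\cdots$, one gets $f_\Phi(0^+)=\tfrac45,\ \tfrac45,\ \tfrac7{10},\ \tfrac7{10}$ respectively; from the limiting values ${\rm arcsl}(1)=\omega$, ${\rm arcslh}(+\infty)=\sqrt2\,\omega$, ${\rm arctl}(+\infty)=\omega$, ${\rm arctlh}(1^-)=\sqrt2\,\omega$ one gets the other endpoint limits $\tfrac1\omega,\ 1,\ 1,\ \tfrac1{\sqrt2\,\omega}$. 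Strict monotonicity makes these two numbers exactly the infimum and supremum of $f_\Phi$; matching them to $\alpha$ and $\beta$ produces precisely the constants $5/(4+\,\cdot\,)$, $\omega/(1+(\omega-1)\,\cdot\,)$, $10/(7+3\,\cdot\,)$ and $\sqrt2\,\omega/(1+(\sqrt2\,\omega-1)\,\cdot\,)$ occurring in \eqref{arcsl2}--\eqref{arctlh2}, and shows that no admissible pair in the family \eqref{dingli1.3} does better.
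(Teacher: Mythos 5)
Your overall strategy is sound and is, up to a change of variable, the paper's own: your $f_\Phi=\frac{x/\Phi-g}{1-g}$ is the M\"obius transform $t\mapsto t/(1-t)$ of the paper's auxiliary quotients $h_i$ (e.g.\ $h_1=\frac{x-\sqrt{1-x^4}\,{\rm arcsl}\,x}{{\rm arcsl}\,x-x}=\frac{f_\Phi}{1-f_\Phi}$), so proving monotonicity of one is equivalent to proving it for the other, and the endpoint limits you compute ($\tfrac45,\tfrac1\omega$; $\tfrac45,1$; $\tfrac7{10},1$; $\tfrac7{10},\tfrac1{\sqrt2\omega}$) correctly translate into the stated constants and their optimality. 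Your treatment of \eqref{arcsl2} and \eqref{arcslh2} is complete and correct: the identities $p'/q'=\bigl(1+\tfrac1{2m}\bigr)^{-1}$ and $x^2\sqrt{1\mp x^4}\,m'(x)=(1+\sqrt{1\mp x^4})\bigl[x-\Phi(x)(2-\sqrt{1\mp x^4})\bigr]$ check out, and the sign analysis via ${\rm arcsl}\,x>x$, ${\rm arcslh}\,x<x$ is valid.

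The genuine gap is \eqref{arctl2} and \eqref{arctlh2}. For these two cases you state explicitly that after a second application of the monotone l'H\^opital rule one is left with ``an elementary but genuinely nontrivial inequality'' whose resolution ``is the main obstacle''; that obstacle is never overcome, and the candidate expression you write down, $\frac{2\sqrt{1+x^4}\,{\rm arctl}\,x}{x}+\frac{2x^3{\rm arctl}\,x}{3\sqrt{1+x^4}}+\frac{4}{3(1+x^4)^{1/4}}$, contains the decreasing term $\frac4{3(1+x^4)^{1/4}}$, so its monotonicity is not termwise evident and no argument is supplied. The difficulty disappears if you choose the denominator differently: taking $p(x)=\sqrt{1+x^4}\,{\rm arctl}\,x-x$ and $q(x)=x-{\rm arctl}\,x$ (i.e.\ working with $f_\Phi/(1-f_\Phi)$ rather than $f_\Phi$), two applications of the monotone l'H\^opital rule give
\begin{equation*}
\frac{p''(x)}{q''(x)}=\frac{2}{3}\,(3+x^4)\,\frac{\sqrt[4]{1+x^4}\,{\rm arctl}\,x}{x}+\frac{1}{3}\sqrt{1+x^4},
\end{equation*}
a sum of products of positive increasing factors once one knows that $\frac{\sqrt[4]{1+x^4}\,{\rm arctl}\,x}{x}$ is increasing -- and that last fact follows from a one-line derivative computation using only $(1+x^4)^{1/4}>1>\frac{{\rm arctl}\,x}{x}$. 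The ${\rm arctlh}$ case is handled identically with $\frac{2}{3}(3-x^4)\frac{\sqrt[4]{1-x^4}\,{\rm arctlh}\,x}{x}+\frac{1}{3}\sqrt{1-x^4}$ decreasing. Until you carry out some such reduction and actually verify the resulting monotonicity, \eqref{arctl2} and both sides of \eqref{arctlh2} remain unproved.
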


\medskip

\section{Basic properties}

By the definitions and the chain rule, we easily obtain the following derivative formulas of the arc lemniscate and the hyperbolic arc lemniscate functions:
\begin{equation*}
\frac{\rm d}{{\rm d}\,x}\,{\rm arcsl}\,x=(1-x^4)^{-\frac{1}{2}},\qquad |x|<1,
\end{equation*}
\begin{equation*}
\frac{\rm d}{{\rm d}\,x}\,{\rm arcslh}\,x=(1+x^4)^{-\frac{1}{2}},\qquad x\in\mathbb{R},
\end{equation*}
\begin{equation*}
\frac{\rm d}{{\rm d}\,x}\,{\rm arctl}\,x=(1+x^4)^{-\frac{3}{4}},\qquad x\in\mathbb{R},
\end{equation*}
\begin{equation*}
\frac{\rm d}{{\rm d}\,x}\,{\rm arctlh}\,x=(1-x^4)^{-\frac{3}{4}},\qquad |x|<1.
\end{equation*}

By the definitions and the inverse function theorem, we easily obtain the following derivative formulas of the lemniscate and the hyperbolic lemniscate functions:
\begin{equation*}
 \frac{\mathrm{d}}{\mathrm{d}\,x}\,{\mathrm{sl}\,x}= \sqrt{1-{\mathrm{sl}^4 x}},\qquad |x|<\omega,
 \end{equation*}
\begin{equation*}
 \frac{\mathrm{d}}{\mathrm{d}\,x}\,{\mathrm{slh}\,x}= \sqrt{1+{\mathrm{slh}^4 x}},\qquad |x|<K,
 \end{equation*}
 \begin{equation*}
 \frac{\mathrm{d}}{\mathrm{d}\,x}\,{\mathrm{tl}\,x}= (1+{\mathrm{tl}^4 x})^{\frac 34},\qquad |x|<\omega,
 \end{equation*}
\begin{equation*}
 \frac{\mathrm{d}}{\mathrm{d}\,x}\,{\mathrm{tlh}\,x}=(1-{\mathrm{tlh}^4 x})^{\frac 34},\qquad |x|<K.
 \end{equation*}

The following derivative formulas are useful:
 \begin{equation*}
 \frac{\mathrm{d}}{\mathrm{d}\,x}\,\sqrt{1-{\mathrm{sl}^4 x}}= -2{\mathrm{sl}^3 x},\qquad |x|<\omega,
 \end{equation*}
\begin{equation*}
 \frac{\mathrm{d}}{\mathrm{d}\,x}\,\sqrt{1+{\mathrm{slh}^4 x}}= 2{\mathrm{slh}^3 x},\qquad |x|<K.
 \end{equation*}

\medskip

It is proved in \cite[Lemma 4.1]{n3} that
for $0<x<1$, there hold
\begin{equation}\label{n1.5}
\mathrm{sl}\,x<x<\mathrm{tl}\,x
\end{equation}
and
\begin{equation}\label{n1.6}
\mathrm{tlh}\,x<x<\mathrm{slh}\,x. 
\end{equation}
By Lemma \ref{yinli6} in the next section, the inequalities \eqref{n1.5} and \eqref{n1.6} actually hold for $0<x<\omega$ and $0<x<K$, respectively.
Therefore, it is natural to ask:
are the functions $\mathrm{sl}\,x$ and $\mathrm{tlh}\,x$ comparable, as well as the functions $\mathrm{tl}\,x$ and $\mathrm{slh}\,x$ for $0<x<\omega$?
Since the functions $\frac{\mathrm{sl}\,x}{x}$,\,$\frac{x}{\mathrm{tl}\,x}$,\,$\frac{\mathrm{tlh}\,x}{x}$ and $\frac{x}{\mathrm{slh}\,x}$ are all less than 1 for $0<x<\omega$, we would further compare these four functions.
The following Theorem \ref{lemnfun1} shows the conclusion.

\medskip

\begin{theorem}\label{lemnfun1}
For $x\in(0,\omega)$, the following inequalities are valid:
\begin{equation}\label{dingli1.1}
{\rm tlh}\,x<{\rm sl}x<x<{\rm slh}\,x<{\rm tl}\,x,
\end{equation}
\begin{equation}\label{dingli1.2}
\frac{x}{{\rm tl}\,x}<\min\left\{\frac{{\rm tlh}x}{x},\frac{x}{{\rm slh}\,x}\right\}
\le \max\left\{\frac{ {\rm tlh}\,x}{x},\frac{x}{{\rm slh}x}\right\}
<\frac{{\rm sl}\,x}{x}.
\end{equation}
\end{theorem}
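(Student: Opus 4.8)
The plan is to treat the chain \eqref{dingli1.1} and the estimate \eqref{dingli1.2} separately, reducing \eqref{dingli1.2} to the two product inequalities $\mathrm{sl}\,x\,\mathrm{slh}\,x>x^{2}$ and $\mathrm{tl}\,x\,\mathrm{tlh}\,x>x^{2}$ on $(0,\omega)$. For \eqref{dingli1.1}, the middle inequalities $\mathrm{tlh}\,x<x$, $\mathrm{sl}\,x<x<\mathrm{slh}\,x$, $x<\mathrm{tl}\,x$ are exactly \eqref{n1.5}--\eqref{n1.6}, which hold on $(0,\omega)$ by Lemma \ref{yinli6}; for the outer ones I would compare integrands: since $(1-t^{4})^{-3/4}>(1-t^{4})^{-1/2}$ and $(1+t^{4})^{-3/4}<(1+t^{4})^{-1/2}$ on $(0,1)$, integration gives $\mathrm{arctlh}\,t>\mathrm{arcsl}\,t$ on $(0,1)$ and $\mathrm{arctl}\,t<\mathrm{arcslh}\,t$ on $(0,\infty)$, and then putting $t=\mathrm{sl}\,x$ (resp.\ $t=\mathrm{slh}\,x$) and applying the increasing function $\mathrm{tlh}$ (resp.\ $\mathrm{tl}$) yields $\mathrm{tlh}\,x<\mathrm{sl}\,x$ (resp.\ $\mathrm{slh}\,x<\mathrm{tl}\,x$).

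The engine for \eqref{dingli1.2} is the pair of identities produced by the substitutions $t=\sqrt2\,s/\sqrt{1+s^{4}}$ in $\mathrm{arcsl}$ and $t=\sqrt2\,s/\sqrt{1-s^{4}}$ in $\mathrm{arcslh}$, namely
\[
\mathrm{sl}\,x=\frac{\sqrt2\,\mathrm{slh}(x/\sqrt2)}{\sqrt{1+\mathrm{slh}^{4}(x/\sqrt2)}},\qquad
\mathrm{slh}\,x=\frac{\sqrt2\,\mathrm{sl}(x/\sqrt2)}{\sqrt{1-\mathrm{sl}^{4}(x/\sqrt2)}},\qquad 0<x<\omega,
\]
together with their consequences, via \eqref{rel1} and \eqref{rel2},
\[
\mathrm{tl}\,x=\frac{\sqrt2\,\mathrm{slh}(x/\sqrt2)}{\sqrt{1-\mathrm{slh}^{4}(x/\sqrt2)}},\qquad
\mathrm{tlh}\,x=\frac{\sqrt2\,\mathrm{sl}(x/\sqrt2)}{\sqrt{1+\mathrm{sl}^{4}(x/\sqrt2)}},\qquad 0<x<\omega;
\]
here one uses $\mathrm{slh}(x/\sqrt2)<\mathrm{slh}(\omega/\sqrt2)=1$ for $x<\omega$, the value $\mathrm{arcslh}(1)=K/2=\omega/\sqrt2$ coming from the symmetry $u\mapsto 1/u$.

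Put $v=x/\sqrt2$ and $R(x)=\mathrm{sl}\,x\,\mathrm{slh}\,x/x^{2}$, so $R(0^{+})=1$. The identities give $R(\sqrt2\,v)=R(v)\big/\sqrt{(1+\mathrm{slh}^{4}v)(1-\mathrm{sl}^{4}v)}$ and $\mathrm{tl}(\sqrt2\,v)\mathrm{tlh}(\sqrt2\,v)/(2v^{2})=R(v)\big/\sqrt{(1-\mathrm{slh}^{4}v)(1+\mathrm{sl}^{4}v)}$. Now $(1-\mathrm{slh}^{4}v)(1+\mathrm{sl}^{4}v)<1$ is immediate from $\mathrm{sl}\,v<\mathrm{slh}\,v$, and $(1+\mathrm{slh}^{4}v)(1-\mathrm{sl}^{4}v)<1$ follows by applying the identities once more at $w=v/\sqrt2$, which exhibit this product as the square of $\frac{(1+\mathrm{sl}^{4}w)(1-\mathrm{slh}^{4}w)}{(1-\mathrm{sl}^{4}w)(1+\mathrm{slh}^{4}w)}$, again $<1$ because $\mathrm{sl}\,w<\mathrm{slh}\,w$. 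Thus $R(\sqrt2\,v)>R(v)$ on $(0,\omega/\sqrt2)$; iterating along $x,x/\sqrt2,x/2,\dots$ and using $R(0^{+})=1$ gives first $R(x)\ge1$, whence $R(x)>R(x/\sqrt2)\ge1$, i.e.\ $\mathrm{sl}\,x\,\mathrm{slh}\,x>x^{2}$ on $(0,\omega)$. Consequently $\mathrm{tl}\,x\,\mathrm{tlh}\,x/x^{2}=R(x/\sqrt2)\big/\sqrt{(1-\mathrm{slh}^{4}(x/\sqrt2))(1+\mathrm{sl}^{4}(x/\sqrt2))}\ge R(x/\sqrt2)>1$. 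Finally \eqref{dingli1.2} is assembled from $\mathrm{tlh}\,x<\mathrm{sl}\,x$, $\mathrm{slh}\,x<\mathrm{tl}\,x$ (from \eqref{dingli1.1}), $x^{2}<\mathrm{sl}\,x\,\mathrm{slh}\,x$, $x^{2}<\mathrm{tl}\,x\,\mathrm{tlh}\,x$ (just proved), and the trivial $\min\le\max$.

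The main obstacle is $\mathrm{sl}\,x\,\mathrm{slh}\,x>x^{2}$: a head‑on attempt (showing $R$ is increasing, or that $(\mathrm{sl}\,x\,\mathrm{slh}\,x-x^{2})''\ge0$) stalls, because near $0$ every quantity agrees with the naive one through order $x^{8}$, so any crude estimate on a derivative loses the sign; the scaling identities above are precisely what makes the cancellation transparent. The other point needing care is that these identities are genuinely valid throughout $(0,\omega)$, i.e.\ that the radicands remain positive, which reduces to $\mathrm{slh}(x/\sqrt2)<1\Leftrightarrow x<\omega$.
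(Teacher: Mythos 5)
Your argument is correct, and for the heart of the theorem --- the product inequalities $\mathrm{sl}\,x\,\mathrm{slh}\,x>x^{2}$ and $\mathrm{tl}\,x\,\mathrm{tlh}\,x>x^{2}$ that encode \eqref{dingli1.2} --- it takes a genuinely different route from the paper. For \eqref{dingli1.1} you are close to the paper: it also proves $\mathrm{arctl}\,x<\mathrm{arcslh}\,x$ (by showing the difference decreases, which is your integrand comparison) and inverts to get $\mathrm{slh}\,x<\mathrm{tl}\,x$; it then derives $\mathrm{tlh}\,x<\mathrm{sl}\,x$ algebraically from \eqref{rel1}--\eqref{rel2} via $(1+\mathrm{slh}^{4}x)(1-\mathrm{sl}^{4}x)<1$, where you instead invert $\mathrm{arcsl}<\mathrm{arctlh}$ directly --- both are fine. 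For \eqref{dingli1.2} the paper applies the monotone form of l'H\^opital's rule (Lemma \ref{yinli2}) twice to $\mathrm{sl}\,x\,\mathrm{slh}\,x/x^{2}$, reaching $f_{2}(x)=-\mathrm{sl}^{3}x\,\mathrm{slh}\,x+\mathrm{slh}^{3}x\,\mathrm{sl}\,x+\sqrt{(1-\mathrm{sl}^{4}x)(1+\mathrm{slh}^{4}x)}$ whose derivative factors as $3(\mathrm{slh}^{2}x-\mathrm{sl}^{2}x)(\cdots)>0$; so your remark that the ``head-on'' monotonicity attempt stalls is not accurate --- it goes through cleanly --- though this does not affect your proof. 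Your alternative rests on the Landen/Gauss-type duplication identities $\mathrm{sl}(\sqrt2\,v)=\sqrt2\,\mathrm{slh}\,v/\sqrt{1+\mathrm{slh}^{4}v}$ and $\mathrm{slh}(\sqrt2\,v)=\sqrt2\,\mathrm{sl}\,v/\sqrt{1-\mathrm{sl}^{4}v}$; I checked these by the substitutions you indicate, checked the induced formulas for $\mathrm{tl}$ and $\mathrm{tlh}$, the factorization of $(1+\mathrm{slh}^{4}v)(1-\mathrm{sl}^{4}v)$ as a square of a ratio that is $<1$ because $\mathrm{sl}\,w<\mathrm{slh}\,w$, and the domain condition $\mathrm{slh}(x/\sqrt2)<1\Leftrightarrow x<\omega$ via $\mathrm{arcslh}(1)=\omega/\sqrt2$; all are right, and the iteration $R(x)>R(x/\sqrt2)>\cdots\to R(0^{+})=1$ closes the argument. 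What each approach buys: the paper's is shorter and needs only the standard monotone-rule machinery already set up for Section 3, while yours makes the mechanism visible --- the inequality $\mathrm{sl}\,x\,\mathrm{slh}\,x>x^{2}$ is exactly the statement that the multiplier $[(1+\mathrm{slh}^{4}v)(1-\mathrm{sl}^{4}v)]^{-1/2}$ in the duplication formula exceeds $1$ --- at the cost of first establishing identities the paper never uses. Both yield the same strictness and the same assembly of \eqref{dingli1.2} from the four ingredients.
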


\medskip

\begin{remark}
(1) The functions $\frac{\mathrm{tlh}x}{x}$ and $\frac{x}{\mathrm{slh}x}$ are not comparable on the whole interval $(0,\omega)$ as shown in Fig. 1.\\
(2) By \cite[Theorem4.1]{n4}, we can get a similar comparison between the arc lemniscate and the hyperbolic arc lemniscate functions as \eqref{dingli1.1}:
\begin{equation}\label{n1.51}
\mathrm{arctl}\,x<\mathrm{arcslh}\,x<x<\mathrm{arcsl}\,x<\mathrm{arctlh}\,x,\quad\,0<x<1.  
\end{equation}
\end{remark}

\vspace{2mm}

\begin{figure}[H]
\centering
\includegraphics[width=8cm]{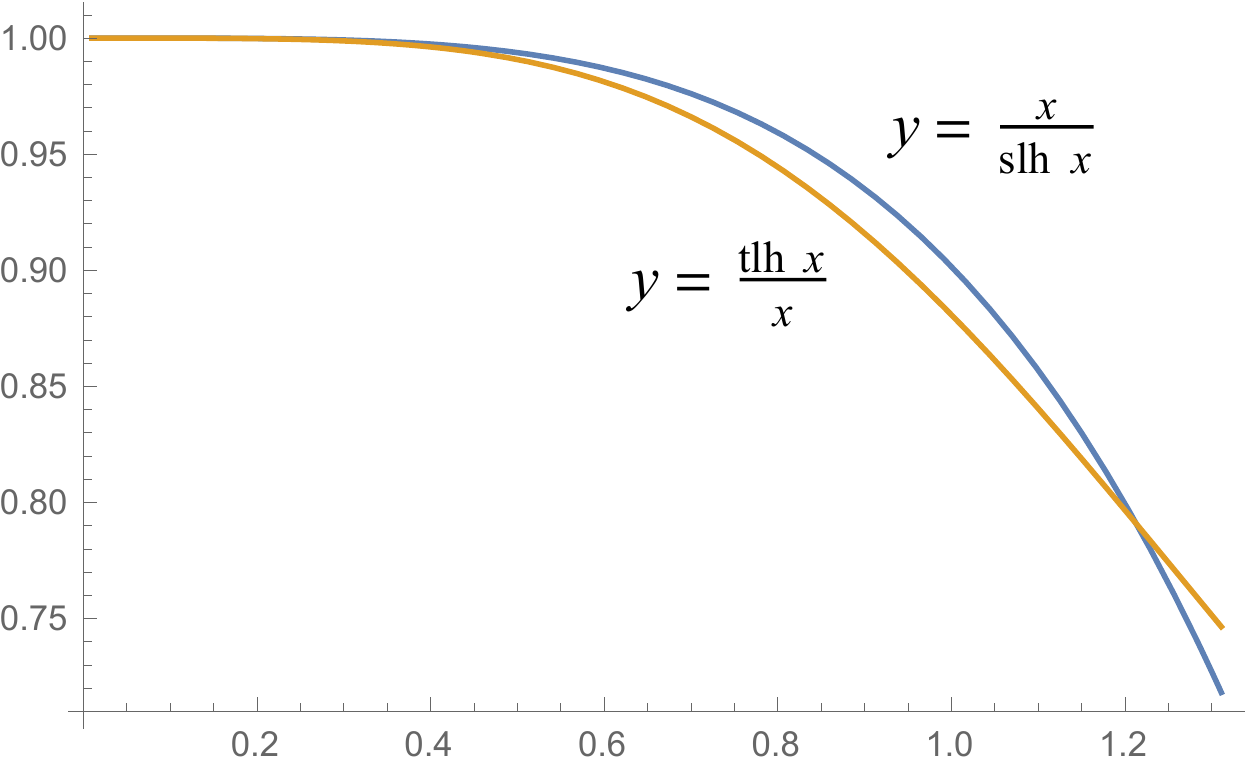}
\caption{ The functions $\frac{\mathrm{tlh}x}{x}$ and $\frac{x}{\mathrm{slh}x}$ are not comparable on the whole interval $(0,\omega)$.}
\end{figure}

To prove Theorem \ref{lemnfun1}, we need some lemmas.
The following Lemma \ref{yinli2} is of great use in deriving monotonicity properties.

\medskip

\begin{lemma}\label{yinli2}{\rm \cite[Theorem 1.25]{avv}}~({\it l'H\^opital's rule})
For $-\infty<a<b<\infty$,
let functions $f,\,g: [a,b]\rightarrow \mathbb{R}$ be continuous on $[a,b]$,
and be differentiable on $(a,b)$,
and let $g'(x)\ne 0$ on $(a,b)$.
If $f'(x)/g'(x)$ is increasing~(deceasing) on $(a,b)$, then so are
\begin{eqnarray*}
\frac{f(x)-f(a)}{g(x)-g(a)}\,\,\,\,\,\,\,and\,\,\,\,\,\,\,\,\frac{f(x)-f(b)}{g(x)-g(b)}.
\end{eqnarray*}
If $f'(x)/g'(x)$ is strictly monotone, then the monotonicity in the conclusion is also strict.
\end{lemma}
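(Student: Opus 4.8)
The plan is to prove both assertions directly from the Cauchy Mean Value Theorem, without invoking any second-order information, since $f$ and $g$ are only assumed differentiable once. First I would record a structural fact about $g$. Because $g'$ exists throughout $(a,b)$ and never vanishes there, Darboux's intermediate value property for derivatives forces $g'$ to keep a constant sign; hence $g$ is strictly monotone on $[a,b]$. In particular $g(x)-g(a)\neq 0$ for $x\in(a,b]$ and $g(x)-g(b)\neq 0$ for $x\in[a,b)$, so the two quotients in the statement are well defined, and for later use I note that $g'(x)\,(g(x)-g(a))>0$ while $g'(x)\,(g(x)-g(b))<0$ throughout $(a,b)$, each independently of whether $g'$ is positive or negative.

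Next I would treat $F(x)=\dfrac{f(x)-f(a)}{g(x)-g(a)}$ on $(a,b]$. It is differentiable on $(a,b)$, and the quotient rule gives
\begin{equation*}
F'(x)=\frac{f'(x)\,(g(x)-g(a))-(f(x)-f(a))\,g'(x)}{(g(x)-g(a))^2}.
\end{equation*}
The denominator is positive, so the sign of $F'(x)$ is that of the numerator. The key manoeuvre is to eliminate the undifferentiated term $f(x)-f(a)$: by the Cauchy Mean Value Theorem there is a point $\xi\in(a,x)$ with $\dfrac{f(x)-f(a)}{g(x)-g(a)}=\dfrac{f'(\xi)}{g'(\xi)}$, hence $f(x)-f(a)=\dfrac{f'(\xi)}{g'(\xi)}\,(g(x)-g(a))$. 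Substituting and factoring out $g'(x)\,(g(x)-g(a))$, the numerator becomes
\begin{equation*}
g'(x)\,(g(x)-g(a))\left[\frac{f'(x)}{g'(x)}-\frac{f'(\xi)}{g'(\xi)}\right].
\end{equation*}

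Now the conclusion reads off the signs. The factor $g'(x)\,(g(x)-g(a))$ is positive by the first paragraph, and since $\xi<x$ the bracket is nonnegative whenever $f'/g'$ is increasing (nonpositive when it is decreasing), strictly so under strict monotonicity. Thus $F'\ge 0$ (resp.\ $\le 0$) on $(a,b)$, giving the asserted monotonicity of $F$, and the strict case upgrades because the bracket is then strictly signed on the whole open interval. For the second quotient $G(x)=\dfrac{f(x)-f(b)}{g(x)-g(b)}$ the computation is identical, except that the Cauchy Mean Value Theorem now produces $\eta\in(x,b)$, so that $\eta>x$ reverses the sign of the bracket, while simultaneously $g'(x)\,(g(x)-g(b))<0$ reverses the sign of the prefactor; the two reversals cancel and $G$ inherits exactly the same monotonicity as $F$.

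The routine but error-prone part — the step I would guard most carefully — is the sign bookkeeping: one must check that $g'(x)\,(g(x)-g(a))>0$ and $g'(x)\,(g(x)-g(b))<0$ hold in both the case $g'>0$ and the case $g'<0$, and that the placement of the Cauchy point ($\xi<x$ versus $\eta>x$) interacts correctly with the chosen direction of monotonicity of $f'/g'$. There is no analytic obstacle beyond this; the only hypotheses used are first-order differentiability, the nonvanishing of $g'$ (via Darboux for the strict monotonicity of $g$), and the Cauchy Mean Value Theorem.
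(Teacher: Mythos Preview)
Your argument is correct; the use of Darboux's theorem to fix the sign of $g'$, followed by the Cauchy Mean Value Theorem to rewrite the numerator of $F'(x)$ as $g'(x)\,(g(x)-g(a))\bigl[f'(x)/g'(x)-f'(\xi)/g'(\xi)\bigr]$, is the standard route, and your sign bookkeeping for both quotients is handled carefully.

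There is nothing to compare against, however: the paper does not give a proof of this lemma. It is simply quoted from \cite[Theorem~1.25]{avv} and used as a black box in the subsequent monotonicity arguments.
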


\medskip

\begin{lemma}\label{yinli31}
For $x\in (0,+\infty)$, there holds
\begin{equation}\label{yinli3.2}
{\mathrm{arctl}\,x}<{\mathrm{arcslh}\,x}.
\end{equation}
\end{lemma}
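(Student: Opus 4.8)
The plan is to prove the inequality $\mathrm{arctl}\,x<\mathrm{arcslh}\,x$ for $x>0$ by rewriting it so that both sides are evaluated as arc lemniscate sine functions and then exploiting the strict monotonicity of $\mathrm{arcsl}$. Recall from the definitions in the introduction that $\mathrm{arctl}\,x=\mathrm{arcsl}\bigl(x/\sqrt[4]{1+x^4}\bigr)$ for all real $x$. If I can also express $\mathrm{arcslh}\,x$ in the form $\mathrm{arcsl}(\varphi(x))$ for a suitable $\varphi$, or at least bound it below by such an expression, then since $\mathrm{arcsl}$ is strictly increasing on $[0,1]$ (its derivative $(1-t^4)^{-1/2}$ is positive), the claim reduces to the pointwise inequality $x/\sqrt[4]{1+x^4}<\varphi(x)$. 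A clean way to realize this: set $t=\mathrm{arcslh}\,x$, so that $x=\mathrm{slh}\,t$, and use the known algebraic relation between $\mathrm{slh}$ and $\mathrm{sl}$; alternatively, note that $\mathrm{arcslh}\,x$ and $\mathrm{arcsl}$ of something are linked through the substitution $u=t/\sqrt[4]{1+t^4}$ inside the defining integral.

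Concretely, I would start from the integral representations directly. We have
\begin{equation*}
\mathrm{arctl}\,x=\int_0^{x/\sqrt[4]{1+x^4}}\frac{\mathrm{d}t}{\sqrt{1-t^4}},\qquad \mathrm{arcslh}\,x=\int_0^{x}\frac{\mathrm{d}t}{\sqrt{1+t^4}}.
\end{equation*}
In the second integral perform the substitution $t=s/\sqrt[4]{1-s^4}$, which is exactly the substitution appearing in the definition of $\mathrm{arctlh}$; a direct computation gives $\mathrm{d}t/\sqrt{1+t^4}=\mathrm{d}s/(1-s^4)^{3/4}$, and as $t$ runs over $[0,x]$, $s$ runs over $[0,x/\sqrt[4]{1+x^4}]$. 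Hence
\begin{equation*}
\mathrm{arcslh}\,x=\int_0^{x/\sqrt[4]{1+x^4}}\frac{\mathrm{d}s}{(1-s^4)^{3/4}}.
\end{equation*}
Now both $\mathrm{arctl}\,x$ and $\mathrm{arcslh}\,x$ are integrals over the \emph{same} interval $[0,y]$ with $y=x/\sqrt[4]{1+x^4}\in(0,1)$, and the integrand comparison is immediate: $(1-s^4)^{3/4}<(1-s^4)^{1/2}$ for $s\in(0,1)$ since $1-s^4<1$ and raising a number in $(0,1)$ to a larger power makes it smaller. Therefore $1/\sqrt{1-s^4}<1/(1-s^4)^{3/4}$ on $(0,y)$, and integrating yields $\mathrm{arctl}\,x<\mathrm{arcslh}\,x$ with strict inequality for every $x>0$.

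The only place requiring genuine care is verifying the change of variables $t=s/\sqrt[4]{1-s^4}$: one must check that it is a smooth increasing bijection from $[0,x/\sqrt[4]{1+x^4}]$ onto $[0,x]$ and that the differential transforms exactly as claimed, i.e. that $1+t^4=1/(1-s^4)$ and $\mathrm{d}t=(1-s^4)^{-3/4}\,\mathrm{d}s$ up to the right power — this is the routine computation underlying the relation $\mathrm{arctlh}\,x=\mathrm{arcslh}(x/\sqrt[4]{1-x^4})$ already recorded in the introduction, read in reverse. I expect no real obstacle here; alternatively, if one prefers to avoid the substitution, the same conclusion follows by differentiating $h(x)=\mathrm{arcslh}\,x-\mathrm{arctl}\,x$ and showing $h'(x)=(1+x^4)^{-1/2}-(1+x^4)^{-3/4}>0$ for $x>0$ together with $h(0)=0$, which is even shorter. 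I would present this derivative version as the main argument and mention the integral substitution as the conceptual reason.
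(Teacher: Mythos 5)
Your proposal is correct. The derivative version you say you would present as the main argument is exactly the paper's proof: the paper sets $g(x)=\mathrm{arctl}\,x-\mathrm{arcslh}\,x$ and notes $g'(x)=(1+x^4)^{-3/4}\bigl(1-(1+x^4)^{1/4}\bigr)<0$, which is your $h'(x)>0$ with the sign of the difference reversed. Your substitution argument is also valid and is a genuinely different route: the change of variables $t=s/\sqrt[4]{1-s^4}$ does give $1+t^4=1/(1-s^4)$ and $\mathrm{d}t=(1-s^4)^{-5/4}\,\mathrm{d}s$, so $\mathrm{arcslh}\,x=\int_0^{y}(1-s^4)^{-3/4}\,\mathrm{d}s$ with $y=x/\sqrt[4]{1+x^4}$, while $\mathrm{arctl}\,x=\int_0^{y}(1-s^4)^{-1/2}\,\mathrm{d}s$, and the integrand comparison finishes it; in effect this identifies $\mathrm{arcslh}\,x=\mathrm{arctlh}\,y$ and $\mathrm{arctl}\,x=\mathrm{arcsl}\,y$ and proves the stronger statement $\mathrm{arcsl}\,y<\mathrm{arctlh}\,y$ on $(0,1)$ (compare \eqref{n1.51}), at the cost of justifying the bijection, which the two-line monotonicity argument avoids.
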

\medskip
\begin{proof}
Let $g(x)={\mathrm{arctl}\,x}-{\mathrm{arcslh}\,x}$.
By differentiation, we get
$$g'(x)=\left(1+x^4\right)^{-\frac{3}{4}}\left(1-(1+x^4)^{\frac{1}{4}} \right)<0,$$
which implies that $g$ is strictly decreasing. Hence we have $g(x)<g(0^+)=0$.
Then the inequality \eqref{yinli3.2} follows.
\end{proof}

\medskip

\begin{lemma}\label{yinli3}
For $x\in (0,\omega)$, there hold
\begin{equation}\label{yinli4.1}
{\mathrm{slh}\,x}<{\mathrm{tl}\,x},
\end{equation}
\begin{equation}\label{yinli4.2}
{\mathrm{tlh}\,x}<{\mathrm{sl}\,x},
\end{equation}
\begin{equation}\label{yinli3.1}
\frac{x}{\mathrm{slh}\,x}<\frac{\mathrm{sl}\,x}{x},
\end{equation}
\begin{equation}\label{yinli4.3}
\frac{x}{\mathrm{tl}\,x}<\frac{\mathrm{tlh}\,x}{x}.
\end{equation}
\end{lemma}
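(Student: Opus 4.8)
The estimates \eqref{yinli4.1} and \eqref{yinli4.2} are comparisons between an arc–lemniscate and a lemniscate function, and each should follow by composing an already available one–variable comparison with a suitable increasing inverse; they will in fact turn out to be equivalent to each other, while \eqref{yinli4.3} is a formal consequence of \eqref{yinli3.1} and \eqref{yinli4.2}. Hence the real content is \eqref{yinli3.1}, i.e.\ $\mathrm{sl}\,x\cdot\mathrm{slh}\,x>x^{2}$ on $(0,\omega)$, which I expect to be the hard part. For \eqref{yinli4.1}: since $\mathrm{arctl}$ is the increasing inverse of $\mathrm{tl}$, the claim $\mathrm{slh}\,x<\mathrm{tl}\,x$ is the same as $\mathrm{arctl}(\mathrm{slh}\,x)<x$, and Lemma~\ref{yinli31} applied at the point $\mathrm{slh}\,x>0$ gives $\mathrm{arctl}(\mathrm{slh}\,x)<\mathrm{arcslh}(\mathrm{slh}\,x)=x$. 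For \eqref{yinli4.2}: $\mathrm{tlh}$ maps $(0,K)$ onto $(0,1)$, so $\mathrm{tlh}\,x\in(0,1)$ when $0<x<\omega<K$; the inequality $\mathrm{arcsl}\,y<\mathrm{arctlh}\,y$ from \eqref{n1.51}, applied at $y=\mathrm{tlh}\,x$, gives $\mathrm{arcsl}(\mathrm{tlh}\,x)<\mathrm{arctlh}(\mathrm{tlh}\,x)=x$, and applying the increasing function $\mathrm{sl}$ yields $\mathrm{tlh}\,x<\mathrm{sl}\,x$. (Either may instead be obtained by a ``first contact'' argument: were $\mathrm{tl}-\mathrm{slh}$, resp.\ $\mathrm{sl}-\mathrm{tlh}$, to vanish again at a first point $x_{0}>0$, then by the derivative formulas above and the equality holding at $x_{0}$ its derivative there is strictly positive --- e.g.\ $(\mathrm{sl}-\mathrm{tlh})'(x_{0})=(1-\mathrm{sl}^{4}x_{0})^{3/4}\bigl[(1-\mathrm{sl}^{4}x_{0})^{-1/4}-1\bigr]>0$ --- contradicting minimality.) By \eqref{rel1}--\eqref{rel2} both \eqref{yinli4.1} and \eqref{yinli4.2} reduce to the single inequality $\mathrm{slh}^{4}x\,(1-\mathrm{sl}^{4}x)<\mathrm{sl}^{4}x$, equivalently $(1-\mathrm{sl}^{4}x)(1+\mathrm{slh}^{4}x)<1$; in particular they are equivalent, and, granting \eqref{yinli3.1}, this last form gives \eqref{yinli4.3}:
$$\mathrm{tl}\,x\cdot\mathrm{tlh}\,x=\frac{\mathrm{sl}\,x\cdot\mathrm{slh}\,x}{\bigl[(1-\mathrm{sl}^{4}x)(1+\mathrm{slh}^{4}x)\bigr]^{1/4}}>\frac{x^{2}}{1}=x^{2},$$
which is $\tfrac{x}{\mathrm{tl}\,x}<\tfrac{\mathrm{tlh}\,x}{x}$.

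\textbf{The core inequality \eqref{yinli3.1}.} I would prove $\mathrm{sl}\,x\cdot\mathrm{slh}\,x>x^{2}$ on $(0,\omega)$ by showing that $W(x):=\mathrm{sl}\,x\cdot\mathrm{slh}\,x/x^{2}$ is strictly increasing there; since $\mathrm{sl}\,x/x\to1$ and $\mathrm{slh}\,x/x\to1$ as $x\to0^{+}$, this gives $W(x)>W(0^{+})=1$. By logarithmic differentiation, $W'(x)$ has the sign of
$$\frac{\sqrt{1-\mathrm{sl}^{4}x}}{\mathrm{sl}\,x}+\frac{\sqrt{1+\mathrm{slh}^{4}x}}{\mathrm{slh}\,x}-\frac{2}{x},$$
so the whole matter comes down to the positivity of this expression on $(0,\omega)$. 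This is where I expect the difficulty to lie: near $x=0$ the first two terms are each $\sim 1/x$, they over- and undershoot $1/x$ by comparable amounts, and the total excess is only of order $x^{7}$ (equivalently $W-1$ vanishes to order $8$ at the origin); moreover one checks that $(\mathrm{sl}\,x\,\mathrm{slh}\,x-x^{2})''$ is positive near $0$ but negative near $\omega$, so neither a single differentiation nor a naive convexity argument suffices. One workable route is to split the expression into a surplus $\bigl(\tfrac{\sqrt{1+\mathrm{slh}^{4}x}}{\mathrm{slh}\,x}-\tfrac1x\bigr)=(\ln(\mathrm{slh}\,x/x))'>0$ and a deficit $\bigl(\tfrac1x-\tfrac{\sqrt{1-\mathrm{sl}^{4}x}}{\mathrm{sl}\,x}\bigr)=-(\ln(\mathrm{sl}\,x/x))'>0$ (the signs being those delivered by Lemma~\ref{yinli2}), to write surplus and deficit as $\Psi(\mathrm{slh}\,x)$ and $\Theta(\mathrm{sl}\,x)$ for explicit one–variable functions $\Psi,\Theta$ (via $x=\mathrm{arcslh}(\mathrm{slh}\,x)=\mathrm{arcsl}(\mathrm{sl}\,x)$), and then, using $\mathrm{sl}\,x<x<\mathrm{slh}\,x$ from \eqref{n1.5}--\eqref{n1.6} together with the monotonicity of suitable auxiliary one–variable functions, to show surplus $>$ deficit; the bookkeeping near $x=\omega$, where $\mathrm{slh}\,x$ may exceed $1$, will need care. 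A second route uses the expansions $\mathrm{sl}\,x=\sum_{k\ge0}(-1)^{k}c_{k}x^{4k+1}$ and $\mathrm{slh}\,x=\sum_{k\ge0}c_{k}x^{4k+1}$ with the \emph{same} positive coefficients $c_{k}$ (from $\mathrm{sl}''=-2\mathrm{sl}^{3}$, $\mathrm{slh}''=2\mathrm{slh}^{3}$, the latter making $\mathrm{slh}$ absolutely monotone): the odd powers of $x^{4}$ then cancel in the product, giving $\mathrm{sl}\,x\,\mathrm{slh}\,x-x^{2}=x^{2}\sum_{M\ge1}\bigl(\sum_{k=0}^{2M}(-1)^{k}c_{k}c_{2M-k}\bigr)x^{8M}$, so it suffices to show each inner coefficient is $\ge0$, which one would read off from a Kaluza–type inequality among the $c_{k}$.

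\textbf{A sanity check.} Two weaker facts have short proofs illustrating the mechanism: on $(0,\omega)$, writing $\chi$ for the left minus the right side, $\mathrm{sl}\,x+\mathrm{slh}\,x>2x$ follows from $\chi(0)=\chi'(0)=0$ and $\chi''=2(\mathrm{slh}^{3}x-\mathrm{sl}^{3}x)>0$, and $\mathrm{sl}^{2}x+\mathrm{slh}^{2}x>2x^{2}$ from $\chi(0)=\chi'(0)=0$ and $\chi''=6(\mathrm{slh}^{4}x-\mathrm{sl}^{4}x)>0$, using the derivative formulas above. However, since $\mathrm{sl}\,x\,\mathrm{slh}\,x$ is essentially a geometric mean, these sum bounds run the wrong way under AM--GM and do not by themselves give \eqref{yinli3.1}; they only serve as a check and as a model for the more delicate argument.
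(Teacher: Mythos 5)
Your treatment of \eqref{yinli4.1}, \eqref{yinli4.2} and \eqref{yinli4.3} is correct and essentially coincides with the paper's: \eqref{yinli4.1} comes from Lemma \ref{yinli31} composed with the increasing inverse $\mathrm{arctl}$; the reduction of both \eqref{yinli4.1} and \eqref{yinli4.2} to $(1-\mathrm{sl}^4x)(1+\mathrm{slh}^4x)<1$ via \eqref{rel1}--\eqref{rel2} is exactly the paper's step for \eqref{yinli4.2}; and your derivation of \eqref{yinli4.3} from $\mathrm{tl}\,x\,\mathrm{tlh}\,x=\mathrm{sl}\,x\,\mathrm{slh}\,x\,\bigl[(1-\mathrm{sl}^4x)(1+\mathrm{slh}^4x)\bigr]^{-1/4}$ together with \eqref{yinli3.1} is the paper's step for \eqref{yinli4.3}. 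The genuine gap is \eqref{yinli3.1}, which you rightly identify as the crux but do not prove: you reduce it to the positivity of $\frac{\sqrt{1-\mathrm{sl}^4x}}{\mathrm{sl}\,x}+\frac{\sqrt{1+\mathrm{slh}^4x}}{\mathrm{slh}\,x}-\frac{2}{x}$ and then only sketch two possible strategies (a surplus/deficit comparison whose ``bookkeeping'' you defer, and a Kaluza-type coefficient argument that would itself require proving log-convexity of the Maclaurin coefficients of $\mathrm{slh}$), neither of which is carried out. As written, the lemma is not established.

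The gap closes cleanly if, instead of differentiating $\log W$, you apply the l'H\^opital monotone rule (Lemma \ref{yinli2}) \emph{twice} to $W(x)=\mathrm{sl}\,x\,\mathrm{slh}\,x/x^2$, which is what the paper does. With $f_{11}=\mathrm{sl}\,x\,\mathrm{slh}\,x$ and $f_{12}=x^2$, one computes
\begin{equation*}
\frac{f_{11}''(x)}{f_{12}''(x)}=\mathrm{sl}\,x\,\mathrm{slh}\,x\,\bigl(\mathrm{slh}^2x-\mathrm{sl}^2x\bigr)+\sqrt{1-\mathrm{sl}^4x}\,\sqrt{1+\mathrm{slh}^4x}=:f_2(x),
\end{equation*}
and the derivative of $f_2$ factors as $3\bigl(\mathrm{slh}^2x-\mathrm{sl}^2x\bigr)\bigl(\mathrm{slh}\,x\sqrt{1-\mathrm{sl}^4x}+\mathrm{sl}\,x\sqrt{1+\mathrm{slh}^4x}\bigr)$, which is positive because $\mathrm{sl}\,x<x<\mathrm{slh}\,x$ by \eqref{n1.5}--\eqref{n1.6}. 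Two applications of Lemma \ref{yinli2} then give that $W$ is strictly increasing, hence $W(x)>W(0^+)=1$. The point you missed is that the monotone form of l'H\^opital only requires the successive derivative \emph{ratios} to be monotone, not the derivative of $W$ itself to be positive, so the high-order cancellation near $x=0$ that worried you never has to be confronted.
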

\medskip
\begin{proof}
(1)
~Given $x\in(0,\omega)$, let $y_1={\mathrm{tl}\,x}$ and $y_2={\mathrm{slh}\,x}$.
By \eqref{yinli3.2}, we obtain
\begin{equation*}
x={\mathrm{arctl}\,y_1}={\mathrm{arcslh}\,y_2}>{\mathrm{arctl}\,y_2}.
\end{equation*}
Then $y_2<y_1$ since ${\mathrm{arctl}\,x}$ is strictly increasing on $(0,+\infty)$.
Hence the inequality \eqref{yinli4.1} follows.

\medskip

(2)
~By \eqref{yinli4.1} and \eqref{rel1}, we have
\begin{equation*}
{\rm slh}^4 x<{\rm tl}^4 x=\frac{{\rm sl}^4 x}{1-{\rm sl}^4 x}
\end{equation*}
and hence
\begin{equation}\label{yinli4.4}
(1+{\rm slh}^4 x)(1-{\rm sl}^4 x)<1.
\end{equation}
Together with \eqref{rel2}, we have
\begin{equation*}
{\rm tlh}^4x=\frac{{\rm slh}^4 x}{1+{\rm slh}^4 x}<{\rm sl}^4 x,
\end{equation*}
which implies the inequality \eqref{yinli4.2}.

\medskip

(3)
~Let $f(x)=\frac{f_{11}(x)}{f_{12}(x)}$, where $f_{11}(x)={\mathrm{sl}\,x}\cdot{\mathrm{slh}\,x}$ and $f_{12}(x)=x^2$.
Then $f_{11}(0^+)=f_{12}(0^+)=0$ and
\begin{equation*}
\frac{f'_{11}(x)}{f'_{12}(x)}=\frac{{\mathrm{slh}\,x}\sqrt{1-{\mathrm{sl}^4 x}}+{\mathrm{sl}\,x} \sqrt{1+{\mathrm{slh}^4 x}}}{2\,x}\,.
\end{equation*}
Clearly, $f'_{11}(0^+)=f'_{12}(0^+)=0$.
By differentiation, we have
\begin{equation*}
\frac{f''_{11}(x)}{f''_{12}(x)}={-{\rm sl^3} x}\,{\rm slh}\,x+{\rm slh^3}\,x\,{\rm sl}\,x
+\sqrt{1-{\rm sl^4} x}\,\sqrt{1+{\rm slh^4} x}\equiv{f_{2}(x)}.
\end{equation*}
Differentiation yields
\begin{equation*}
f'_2(x)=3({\mathrm{slh}^2} x-{\mathrm{sl}^2} x)\left({\mathrm{slh}}\,x \sqrt{1-{\mathrm{sl}^4 x}}+{\mathrm{sl}\,x} \sqrt{1+{\mathrm{slh}^4 x}}\right).
\end{equation*}
By \eqref{n1.5} and \eqref{n1.6}, we obtain
$${\mathrm{sl}\,x}<x<{\mathrm{slh}\,x},\quad\,0<x<\omega,$$
which implies $f_2'(x)>0$ and hence $f_2$ is strictly increasing.
By Lemma \ref{yinli2}, we see that $f$ is strictly increasing.
Since $f(0^+)=f_{2}(0^+)=1$, we get
\begin{equation}\label{yinli3.3}
\frac{{\mathrm{sl}x}\cdot{\mathrm{slh}x}}{x^2}>1.
\end{equation}
Thus the inequality \eqref{yinli3.1} follows.

\medskip

(4)
~Let $h(x)=\frac{{\rm tl}\,x\cdot{\rm tlh}\,x}{x^2}$.
By \eqref{rel1}, \eqref{rel2}, \eqref{yinli4.4} and \eqref{yinli3.3}, we have
\begin{equation*}
 h(x)=\frac{\frac{{\rm sl}\,x\cdot{\rm slh}\,x}{x^2}}{\sqrt[4]{(1+{\rm slh}^4 x)(1-{\rm sl}^4 x)}}>1.
 \end{equation*}
This follows the inequality \eqref{yinli4.3}.
\end{proof}

\medskip

\begin{proof}[Proof of Theorem \ref{lemnfun1}]
By \eqref{n1.5},\,\eqref{n1.6},\,\eqref{yinli4.1} and \eqref{yinli4.2}, we obtain the inequalities \eqref{dingli1.1}.
Utilizing \eqref{dingli1.1},\,\eqref{yinli3.1} and \eqref{yinli4.3}, we get the inequalities \eqref{dingli1.2}.
\end{proof}

\medskip

\section{Shafer-Fink type inequalities}

In this section, we will prove the main Theorem \ref{sf1} and Theorem \ref{sf2}.
We first prove monotonicity properties of some functions involving the arc lemniscate and the hyperbolic arc lemniscate functions.

\begin{lemma}\label{yinli6}
(1) The function $f_{1}(x)\equiv\frac{\mathrm{arcsl}\,x}{x}$ is strictly increasing on $(0,1)$ with range $(1,\omega)$;\\
(2) The function $f_{2}(x)\equiv\frac{\mathrm{arcslh}\,x}{x}$ is strictly decreasing on $(0,+\infty)$ with range $(0,1)$;\\
(3) The function $f_{3}(x)\equiv\frac{\mathrm{arctl}\,x}{x}$ is strictly decreasing on $(0,+\infty)$ with range $(0,1)$;\\
(4) The function $f_{4}(x)\equiv\frac{\mathrm{arctlh}\,x}{x}$ is strictly increasing on $(0,1)$ with range $(1,\sqrt2\omega)$.\\
\end{lemma}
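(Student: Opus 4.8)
The plan is to apply the monotone form of l'H\^opital's rule (Lemma \ref{yinli2}) to each of the four quotients, writing $f_i(x)$ as a ratio of increments based at $0$, i.e. $f_i(x)=\frac{F_i(x)-F_i(0)}{x-0}$ with $F_1=\mathrm{arcsl}$, $F_2=\mathrm{arcslh}$, $F_3=\mathrm{arctl}$, $F_4=\mathrm{arctlh}$ (all of which vanish at $0$ and are continuous on the relevant closed interval). Using the derivative formulas collected in Section 2, the auxiliary quotient $F_i'(x)/1$ is in each case an explicit elementary function whose monotonicity can be read off directly.

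Concretely: for (1), $F_1'(x)=(1-x^4)^{-1/2}$ is strictly increasing on $(0,1)$, so $f_1$ is strictly increasing there; for (2), $F_2'(x)=(1+x^4)^{-1/2}$ is strictly decreasing on $(0,+\infty)$; for (3), $F_3'(x)=(1+x^4)^{-3/4}$ is strictly decreasing on $(0,+\infty)$; and for (4), $F_4'(x)=(1-x^4)^{-3/4}$ is strictly increasing on $(0,1)$. Since $\mathrm{arcslh}$ and $\mathrm{arctl}$ are continuous on all of $\mathbb{R}$, one applies Lemma \ref{yinli2} on each compact subinterval $[0,b]$ and lets $b\to+\infty$ to obtain the monotonicity of $f_2$ and $f_3$ on $(0,+\infty)$; the cases $f_1$ and $f_4$ are handled directly on $(0,1)$.

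It then remains to identify the ranges via one-sided limits at the endpoints. At $0^+$ every $f_i(x)\to 1$, since ordinary l'H\^opital gives $\lim_{x\to 0^+}f_i(x)=\lim_{x\to 0^+}F_i'(x)=1$ in each case (equivalently $F_i(x)=x+o(x)$). At the right endpoints one invokes the limiting values recorded in the Introduction: $f_1(x)\to\mathrm{arcsl}(1)=\omega$ as $x\to 1^-$; and since $x/\sqrt[4]{1-x^4}\to+\infty$, $f_4(x)=\mathrm{arcslh}\!\left(x/\sqrt[4]{1-x^4}\right)/x\to\mathrm{arcslh}(+\infty)=K=\sqrt{2}\,\omega$ as $x\to 1^-$; while as $x\to+\infty$, $\mathrm{arcslh}\,x\to K$ and $\mathrm{arctl}\,x=\mathrm{arcsl}\!\left(x/\sqrt[4]{1+x^4}\right)\to\mathrm{arcsl}(1)=\omega$ are both finite, so $f_2(x)\to 0$ and $f_3(x)\to 0$. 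Combining the monotonicity with these limits yields the ranges $(1,\omega)$, $(0,1)$, $(0,1)$, and $(1,\sqrt{2}\,\omega)$ respectively.

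I do not expect any genuine obstacle here: the derivative quotients are manifestly monotone, so the only points needing a little care are the passage from the compact-interval statement of l'H\^opital's rule to the half-line in parts (2)--(3) (routine), and the justification of the boundary limits --- the limit at $0$ from the linear behaviour of the integrands near $0$, and the limits at the far endpoint read off directly from the defining integrals together with the already-known constants $\omega$ and $K=\sqrt{2}\,\omega$.
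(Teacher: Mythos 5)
Your proposal is correct and follows essentially the same route as the paper: each $f_i$ is written as a quotient vanishing at $0$, the monotone l'H\^opital rule (Lemma \ref{yinli2}) is applied to the manifestly monotone derivative quotients $(1\mp x^4)^{-1/2}$, $(1\pm x^4)^{-3/4}$, and the ranges are obtained from the limit $1$ at $0^+$ and the known values $\omega$, $K=\sqrt{2}\,\omega$ (or decay to $0$) at the other endpoint. Your extra remark about exhausting $(0,+\infty)$ by compact intervals is a routine detail the paper leaves implicit.
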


\begin{proof}
(1) Write $f_{1}(x)=\frac{f_{11}(x)}{f_{12}(x)}$, where $f_{11}(x)={\mathrm{arcsl}\,x}$ and $f_{12}(x)=x$.
Then $f_{11}(0^+)=f_{12}(0^+)=0$ and
\begin{equation*}
\frac{f'_{11}(x)}{f'_{12}(x)}=\frac{1}{\sqrt{1-x^4}},
\end{equation*}
which is strictly increasing.
Hence $f_{1}$ is strictly increasing by Lemma \ref{yinli2}.
The limiting value
\begin{equation*}
f_{1}(0^+)=\lim\limits_{x\to 0^{+}}\frac{f_{11}'(x)}{f_{12}'(x)}=1
\end{equation*}
and
$f_{1}(1^-)=\omega$ is clear.

\medskip

(2) Write $f_{2}(x)=\frac{f_{21}(x)}{f_{22}(x)}$, where $f_{21}(x)={\mathrm{arcslh}\,x}$ and $f_{22}(x)=x$.
Then $f_{21}(0^+)=f_{22}(0^+)=0$ and
\begin{equation*}
\frac{f'_{21}(x)}{f'_{22}(x)}=\frac{1}{\sqrt{1+x^4}},
\end{equation*}
which is strictly decreasing.
Hence $f_{2}$ is strictly decreasing by Lemma \ref{yinli2}.
The limiting value
\begin{equation*}
f_{2}(0^+)=\lim\limits_{x\to 0^{+}}\frac{f_{21}'(x)}{f_{22}'(x)}=1
\end{equation*}
and
$f_{2}(+\infty)=0$ is clear.

\medskip

(3) Write $f_{3}(x)=\frac{f_{31}(x)}{f_{32}(x)}$, where $f_{31}(x)={\mathrm{arctl}\,x}$ and $f_{32}(x)=x$.
Then $f_{31}(0^+)=f_{32}(0^+)=0$ and
\begin{equation*}
\frac{f'_{31}(x)}{f'_{32}(x)}=(1+x^4)^{-\frac{3}{4}},
\end{equation*}
which is strictly decreasing.
Hence $f_{3}$ is strictly decreasing by Lemma \ref{yinli2}.
The limiting value
\begin{equation*}
f_{3}(0^+)=\lim\limits_{x\to 0^{+}}\frac{f_{31}'(x)}{f_{32}'(x)}=1
\end{equation*}
and
$f_{3}(+\infty)=0$ is clear.

\medskip

(4) Write $f_{4}(x)=\frac{f_{41}(x)}{f_{42}(x)}$, where $f_{41}(x)={\mathrm{arctlh}\,x}$ and $f_{42}(x)=x$.
Then $f_{41}(0^+)=f_{42}(0^+)=0$ and
\begin{equation*}
\frac{f'_{41}(x)}{f'_{42}(x)}=(1-x^4)^{-\frac{3}{4}},
\end{equation*}
which is strictly increasing.
Hence $f_{4}$ is strictly increasing by Lemma \ref{yinli2}. The limiting value
\begin{equation*}
f_{4}(0^+)=\lim\limits_{x\to 0^{+}}\frac{f_{41}'(x)}{f_{42}'(x)}=1
\end{equation*}
and
$f_{4}(1^-)=\sqrt2\omega$ is clear.
\end{proof}

\medskip

\begin{lemma}\label{yinli7}
(1) The function $g_{1}(x)\equiv\frac{x-\sqrt[4]{1-x^4}\,\mathrm{arcsl}\,x}{\mathrm{arcsl}\,x-x}$ is strictly increasing on $(0,1)$ with range $(\frac{3}{2},\frac{1}{\omega-1})$;\\
(2) The function $g_{2}(x)\equiv\frac{\sqrt[4]{1+x^4}\,\mathrm{arcslh}\,x-x}{x-\mathrm{arcslh}\,x}$ is strictly decreasing on $(0,+\infty)$ with range $(\sqrt{2}\,\omega-1,\frac{3}{2})$;\\
(3) The function $g_{3}(x)\equiv\frac{\sqrt[4]{1+x^4}\,\mathrm{arctl}\,x-x}{x-\mathrm{arctl}\,x}$ is strictly decreasing on $(0,+\infty)$ with range $(\omega-1,\frac{2}{3})$;\\
(4) The function $g_{4}(x)\equiv\frac{x-\sqrt[4]{1-x^4}\,\mathrm{arctlh}\,x}{\mathrm{arctlh}\,x-x}$ is strictly increasing on $(0,1)$ with range $(\frac{2}{3},\frac{1}{\sqrt{2}\omega-1})$.
\end{lemma}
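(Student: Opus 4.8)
The plan is to establish all four monotonicity statements by the same mechanism: express $g_i$ as a quotient $A/B$ with $A,B$ vanishing at the relevant endpoint, apply the monotone l'H\^opital rule (Lemma \ref{yinli2}) \emph{twice}, and then read off the monotonicity of the resulting second-derivative quotient from Lemma \ref{yinli6}. The endpoint values are then obtained by l'H\^opital at one end and by direct substitution at the other. I carry out $g_1$ in detail; $g_2,g_3,g_4$ are handled identically.

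For $g_1$ put $A(x)=x-\sqrt[4]{1-x^4}\,\mathrm{arcsl}\,x$ and $B(x)=\mathrm{arcsl}\,x-x$, so $A(0^+)=B(0^+)=0$. A short computation gives $B'(x)=(1-x^4)^{-1/2}-1>0$ on $(0,1)$ and $A'(x)=1+x^3(1-x^4)^{-3/4}\,\mathrm{arcsl}\,x-(1-x^4)^{-1/4}$, so that $A'(0^+)=B'(0^+)=0$; also $B''(x)=2x^3(1-x^4)^{-3/2}>0$ on $(0,1)$. Hence, by two successive applications of Lemma \ref{yinli2}, it suffices to show that $A''/B''$ is strictly increasing on $(0,1)$. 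The point of differentiating twice is that the $(1-x^4)^{-5/4}$ terms in $A''$ cancel, leaving the transparent expression
\[
\frac{A''(x)}{B''(x)}=\frac{3x^2(1-x^4)^{-7/4}\,\mathrm{arcsl}\,x}{2x^3(1-x^4)^{-3/2}}=\frac{3}{2}\cdot\frac{\mathrm{arcsl}\,x}{x}\cdot\frac{1}{\sqrt[4]{1-x^4}}.
\]
By Lemma \ref{yinli6}(1) the factor $\mathrm{arcsl}(x)/x$ is positive and strictly increasing on $(0,1)$, and so is $1/\sqrt[4]{1-x^4}$; hence $A''/B''$ is strictly increasing, and therefore so is $g_1$. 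Finally $g_1(0^+)=\lim_{x\to 0^+}A''/B''=3/2$, while letting $x\to 1^-$ in the defining formula (with $\sqrt[4]{1-x^4}\to 0$, $\mathrm{arcsl}\,x\to\omega$) gives $g_1(1^-)=1/(\omega-1)$; with strict monotonicity this yields the stated range.

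The same two-step argument applies to the remaining three functions. For $g_2$ the second-derivative quotient turns out to be $\tfrac32\cdot\frac{\mathrm{arcslh}\,x}{x}\cdot(1+x^4)^{-1/4}$, a product of two positive strictly \emph{decreasing} functions (Lemma \ref{yinli6}(2)), so $g_2$ is strictly decreasing. For $g_3$ and $g_4$, where the arc functions involved have derivative $(1\mp x^4)^{-3/4}$, the cancellation is only partial and one is left with
\[
\frac{\mathrm{arctl}\,x}{x}-\frac{\sqrt[4]{1+x^4}}{3}\qquad\text{and}\qquad\frac{\mathrm{arctlh}\,x}{x}-\frac{\sqrt[4]{1-x^4}}{3}
\]
respectively; each is a sum of two functions that are simultaneously strictly decreasing (for $g_3$, by Lemma \ref{yinli6}(3)) or simultaneously strictly increasing (for $g_4$, by Lemma \ref{yinli6}(4)), which gives the asserted monotonicity. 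The boundary values follow exactly as for $g_1$ — one end via l'H\^opital applied to the second-derivative quotient, the other by direct substitution — using $\mathrm{arctl}(+\infty)=\omega$, $\mathrm{arcslh}(+\infty)=\sqrt2\,\omega$ and $\mathrm{arctlh}(1^-)=\sqrt2\,\omega$.

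The obstacle I anticipate is organizational rather than conceptual. A single application of Lemma \ref{yinli2} does not suffice: the first-derivative quotient $A'/B'$ is not manifestly monotone, and a naive attempt to simplify it first by clearing fractional powers can even introduce a spurious interior zero of the new denominator's derivative, so one really must differentiate a second time — at which point the awkward algebraic terms (partly) cancel and monotonicity becomes visible. One minor technical point: for $g_2$ and $g_3$ the interval $(0,+\infty)$ is unbounded, so Lemma \ref{yinli2} is applied on an arbitrary compact $[a,b]\subset(0,+\infty)$ and the conclusion extended by letting $a\to 0^+$ and $b\to+\infty$.
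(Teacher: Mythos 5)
Your proposal is correct and follows essentially the same route as the paper: the same quotient decompositions, two successive applications of the monotone l'H\^opital rule, the identical second-derivative quotients $\tfrac32\tfrac{\mathrm{arcsl}\,x}{x}(1-x^4)^{-1/4}$, $\tfrac32\tfrac{\mathrm{arcslh}\,x}{x}(1+x^4)^{-1/4}$, $\tfrac{\mathrm{arctl}\,x}{x}-\tfrac13\sqrt[4]{1+x^4}$, $\tfrac{\mathrm{arctlh}\,x}{x}-\tfrac13\sqrt[4]{1-x^4}$, whose monotonicity is read off from Lemma \ref{yinli6}, and the same endpoint evaluations. One small polish: for the unbounded cases $g_2,g_3$ the lemma should be applied on $[0,b]$ (the anchor where numerator and denominator vanish) for arbitrary finite $b$ and then $b\to+\infty$, rather than on compact $[a,b]\subset(0,+\infty)$ with $a>0$, since for $a>0$ the quotient $\frac{f(x)-f(a)}{g(x)-g(a)}$ is no longer the function $g_i$.
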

\medskip
\begin{proof}
(1) Write $g_{1}(x)=\frac{g_{11}(x)}{g_{12}(x)}$,
where $g_{11}(x)=x-\sqrt[4]{1-x^4}\,\mathrm{arcsl}\,x$ and $g_{12}(x)=\mathrm{arcsl}\,x-x$.
Then $g_{11}(0^+)=g_{12}(0^+)=0$ and
\begin{equation*}
\frac{g'_{11}(x)}{g'_{12}(x)}=\frac{x^3\,(1-x^4)^{-\frac{3}{4}}\,{\rm arcsl}\,x-(1-x^4)^{-\frac{1}{4}}+1}{(1-x^4)^{-\frac{1}{2}}-1}.
\end{equation*}
Clearly, $g'_{11}(0^+)=g'_{12}(0^+)=0$.
By differentiation, we get
\begin{equation*}
\frac{g''_{11}(x)}{g''_{12}(x)}=\frac{3}{2}\,\frac{\mathrm{arcsl}\,x}{x}\,\frac{1}{\sqrt[4]{1-x^4}},
\end{equation*}
which is strictly increasing by Lemma \ref{yinli6}(1).
Hence $g_{1}$ is strictly increasing by Lemma \ref{yinli2}.
The limiting value
\begin{equation*}
g_{1}(0^+)=\lim\limits_{x\to 0^{+}}{\frac{g''_{11}(x)}{g''_{12}(x)}}=\frac{3}{2}
\end{equation*}
and
$g_{1}(1^-)=\frac{1}{\omega-1}$ is clear.

\medskip

(2) Write $g_{2}(x)=\frac{g_{21}(x)}{g_{22}(x)}$, where $g_{21}(x)=\sqrt[4]{1+x^4}\,\mathrm{arcslh}\,x-x$ and $g_{22}(x)=x-\mathrm{arcslh}\,x$.
Then $g_{21}(0^+)=g_{22}(0^+)=0$ and
\begin{equation*}
\frac{g_{21}'(x)}{g_{22}'(x)}=\frac{x^3\,(1+x^4)^{-\frac{3}{4}}\,{\rm arcslh}\,x+(1+x^4)^{-\frac{1}{4}}-1}{1-{(1+x^4)}^{-\frac{1}{2}}}. 
\end{equation*}
Clearly, $g'_{21}(0^+)=g'_{22}(0^+)=0$.
By differentiation, we get
\begin{equation*}
\frac{g''_{21}(x)}{g''_{22}(x)}=\frac{3}{2}\,\frac{\mathrm{arcslh}\,x}{x}\,\frac{1}{\sqrt[4]{1+x^4}},
\end{equation*}
which is strictly decreasing by Lemma \ref{yinli6}(2).
Hence $g_{2}$ is strictly decreasing by Lemma \ref{yinli2}.
The limiting value
\begin{equation*}
g_{2}(0^+)=\lim\limits_{x\to 0^{+}}{\frac{g''_{21}(x)}{g''_{22}(x)}}=\frac{3}{2}
\end{equation*}
and
$g_{2}(+\infty)=\sqrt{2}\,\omega-1$ is clear.

\medskip

(3) Write $g_{3}(x)=\frac{g_{31}(x)}{g_{32}(x)}$, where $g_{31}(x)=\sqrt[4]{1+x^4}\,\mathrm{arctl}\,x-x$ and $g_{32}(x)=x-\mathrm{arctl}\,x$.
Then $g_{31}(0^+)=g_{32}(0^+)=0$ and
\begin{equation*}
\frac{g'_{31}(x)}{g'_{32}(x)}=\frac{x^3\,(1+x^4)^{-\frac{3}{4}}\,{\rm arctl}\,x+(1+x^4)^{-\frac{1}{2}}-1}{1-{(1+x^4)}^{-\frac{3}{4}}}\,.
\end{equation*}
Clearly, $g'_{31}(0^+)=g'_{32}(0^+)=0$.
By differentiation, we get
\begin{equation*}
\frac{g''_{31}(x)}{g''_{32}(x)}=\frac{\mathrm{arctl}\,x}{x}-\frac{1}{3}\sqrt[4]{1+x^4},
\end{equation*}
which is strictly decreasing by Lemma \ref{yinli6}(3).
Hence $g_{3}$ is strictly decreasing by Lemma \ref{yinli2}.
The limiting value
\begin{equation*}
g_{3}(0^+)=\lim\limits_{x\to 0^{+}}{\frac{g''_{31}(x)}{g''_{32}(x)}}=\frac{2}{3}
\end{equation*}
and
$g_{3}(+\infty)=\omega-1$ is clear.

\medskip

(4) Write $g_{4}(x)=\frac{g_{41}(x)}{g_{42}(x)}$,
where $g_{41}(x)=x-\sqrt[4]{1-x^4}\,\mathrm{arctlh}\,x$ and $g_{42}(x)=\mathrm{arctlh}\,x-x$.
Then $g_{41}(0^+)=g_{42}(0^+)=0$ and
\begin{equation*}
\frac{g'_{41}(x)}{g'_{42}(x)}=\frac{x^3\,(1-x^4)^{-\frac{3}{4}}\,{\rm arctlh}\,x-(1-x^4)^{-\frac{1}{2}}+1}{{(1-x^4)}^{-\frac{3}{4}}-1}\,.
\end{equation*}
Clearly, $g'_{41}(0^+)=g'_{42}(0^+)=0$.
By differentiation, we get
\begin{equation*}
\frac{g''_{41}(x)}{g''_{42}(x)}=\frac{\mathrm{arctlh}\,x}{x}-\frac{1}{3}\sqrt[4]{1-x^4}\,,
\end{equation*}
which is strictly increasing by Lemma \ref{yinli6}(4).
Hence $g_{4}$ is strictly increasing by Lemma \ref{yinli2}.
The limiting value
\begin{equation*}
g_{4}(0^+)=\lim\limits_{x\to 0^{+}}{\frac{g''_{41}(x)}{g''_{42}(x)}}=\frac{2}{3}
\end{equation*}
and
$g_{4}(1^-)=\frac{1}{\sqrt{2}\omega-1}$ is clear.
\end{proof}

\medskip

\begin{proof}[Proof of Theorem \ref{sf1}]
The inequalities \eqref{arcsl4}~--~\eqref{arctlh4} follow from the odevity of the arc lemniscate and the hyperbolic arc lemniscate functions and the monotonicity properties of the functions in Lemma \ref{yinli7}. It is easy to see that the constants in the inequalities are best possible from the ranges and the monotonicity of the corresponding functions in Lemma \ref{yinli7}.
\end{proof}

\medskip

\begin{lemma}\label{yinli51}
(1) The function $f_{1}(x)\equiv\frac{\sqrt{1-x^4}\,\mathrm{arcsl}\,x}{x}$ is strictly decreasing on $(0,1)$ with range $(0,1)$;\\
(2) The function $f_{2}(x)\equiv\frac{\sqrt{1+x^4}\,\mathrm{arcslh}\,x}{x}$ is strictly increasing on $(0,+\infty)$ with range $(1,+\infty)$;\\
(3) The function $f_{3}(x)\equiv\frac{\sqrt[4]{1+x^4}\,\mathrm{arctl}\,x}{x}$ is strictly increasing on $(0,+\infty)$ with range $(1,\omega)$;\\
(4) The function $f_{4}(x)\equiv\frac{\sqrt[4]{1-x^4}\,\mathrm{arctlh}\,x}{x}$ is strictly decreasing on $(0,1)$ with range $(0,1)$.
\end{lemma}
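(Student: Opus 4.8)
The plan is to prove all four monotonicity assertions by one uniform device — logarithmic differentiation followed by clearing denominators — and to reduce each sign question to one of the elementary comparisons $\mathrm{arcsl}\,x>x$ (on $(0,1)$), $\mathrm{arcslh}\,x<x$ and $\mathrm{arctl}\,x<x$ (on $(0,+\infty)$), and $\mathrm{arctlh}\,x>x$ (on $(0,1)$). Each of these is immediate from Lemma~\ref{yinli6}: part (1) there gives $\mathrm{arcsl}\,x/x>1$, part (2) gives $\mathrm{arcslh}\,x/x<1$, part (3) gives $\mathrm{arctl}\,x/x<1$, and part (4) gives $\mathrm{arctlh}\,x/x>1$. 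The endpoint values in Lemma~\ref{yinli6} will also supply the left-endpoint limits needed for the ranges, since each of the four quotients $\mathrm{arcsl}\,x/x$, $\mathrm{arcslh}\,x/x$, $\mathrm{arctl}\,x/x$, $\mathrm{arctlh}\,x/x$ tends to $1$ as $x\to0^{+}$.

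Concretely, for part (1) write $\log f_{1}(x)=\tfrac12\log(1-x^4)+\log\mathrm{arcsl}\,x-\log x$ on $(0,1)$; differentiating (using $\frac{\mathrm d}{\mathrm d x}\mathrm{arcsl}\,x=(1-x^4)^{-1/2}$) and multiplying through by $x(1-x^4)>0$ gives
\[
x(1-x^4)\,\frac{f_{1}'(x)}{f_{1}(x)}=\frac{x\sqrt{1-x^4}}{\mathrm{arcsl}\,x}-(1+x^4),
\]
which is negative because $x\sqrt{1-x^4}<x<\mathrm{arcsl}\,x<(1+x^4)\,\mathrm{arcsl}\,x$. Parts (2)--(4) run identically: clearing denominators reduces $f_2'/f_2>0$ to $x\sqrt{1+x^4}>(1-x^4)\,\mathrm{arcslh}\,x$, reduces $f_3'/f_3>0$ to $x\sqrt[4]{1+x^4}>\mathrm{arctl}\,x$, and reduces $f_4'/f_4<0$ to $x\sqrt[4]{1-x^4}<\mathrm{arctlh}\,x$. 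The second and fourth follow from $\mathrm{arctl}\,x<x<x\sqrt[4]{1+x^4}$ and $\mathrm{arctlh}\,x>x>x\sqrt[4]{1-x^4}$. The mild wrinkle is part (2): when $x\ge1$ the chain $(1-x^4)\,\mathrm{arcslh}\,x<\mathrm{arcslh}\,x<x<x\sqrt{1+x^4}$ is not available, so one splits that range off, where $x\sqrt{1+x^4}>(1-x^4)\,\mathrm{arcslh}\,x$ is trivial since the right side is $\le0$. For parts (3) and (4) there is in fact a slicker route: the substitutions $u=x/\sqrt[4]{1+x^4}$ and $z=x/\sqrt[4]{1-x^4}$, which by the definitions of $\mathrm{arctl}$ and $\mathrm{arctlh}$ satisfy $\mathrm{arctl}\,x=\mathrm{arcsl}\,u$ and $\mathrm{arctlh}\,x=\mathrm{arcslh}\,z$, turn $f_3$ into exactly $\mathrm{arcsl}(u)/u$ and $f_4$ into $\mathrm{arcslh}(z)/z$; since $x\mapsto u$ and $x\mapsto z$ are increasing bijections of the relevant intervals, both monotonicity and the stated ranges drop straight out of Lemma~\ref{yinli6}(1) and (2).

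With monotonicity in hand the ranges follow from continuity together with the endpoint values. Every left endpoint gives $1$. For $f_1$ and $f_4$ the right-endpoint value is $0$, since $\sqrt{1-x^4}$ and $\sqrt[4]{1-x^4}$ tend to $0$ while $\mathrm{arcsl}\,x/x\to\omega$ and $\mathrm{arctlh}\,x/x\to\sqrt2\,\omega$ stay finite. For $f_2$, $\sqrt{1+x^4}/x\to+\infty$ and $\mathrm{arcslh}\,x\to K>0$, so $f_2\to+\infty$. For $f_3$, $\sqrt[4]{1+x^4}/x\to1$ and $\mathrm{arctl}\,x=\mathrm{arcsl}\bigl(x/\sqrt[4]{1+x^4}\bigr)\to\mathrm{arcsl}(1)=\omega$, so $f_3\to\omega$; this last limit is the only endpoint computation that needs a word of justification, namely continuity of $\mathrm{arcsl}$ at $1$ together with $x/\sqrt[4]{1+x^4}\to1^{-}$. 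I do not expect a genuine obstacle: the only things to watch are keeping the signs straight after multiplying by $x(1\pm x^4)$, and not overlooking the $x\ge1$ case in part (2).
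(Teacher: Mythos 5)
Your proof is correct, and for half of the lemma it follows a genuinely different route from the paper. The paper proves parts (1) and (2) with the l'H\^opital monotone rule (its Lemma \ref{yinli2}): it writes $f_1=\sqrt{1-x^4}\,\mathrm{arcsl}\,x\big/x$ as a quotient vanishing at $0^+$, computes the derivative ratio $1-\tfrac{2x^3\,\mathrm{arcsl}\,x}{\sqrt{1-x^4}}$ (resp.\ $1+\tfrac{2x^3\,\mathrm{arcslh}\,x}{\sqrt{1+x^4}}$), observes its monotonicity, and invokes the rule; this avoids any need for the pointwise bounds $\mathrm{arcsl}\,x>x$, $\mathrm{arcslh}\,x<x$. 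You instead differentiate logarithmically and settle the sign directly from those bounds (which Lemma \ref{yinli6} supplies), a more elementary argument whose only delicate point — the sign change of $1-x^4$ in part (2), forcing the split at $x\ge 1$ — you handled correctly; your displayed identity
\[
x(1-x^4)\,\frac{f_1'(x)}{f_1(x)}=\frac{x\sqrt{1-x^4}}{\mathrm{arcsl}\,x}-(1+x^4)
\]
and its analogues check out. For parts (3) and (4) your cleared-denominator conditions $x\sqrt[4]{1+x^4}>\mathrm{arctl}\,x$ and $x\sqrt[4]{1-x^4}<\mathrm{arctlh}\,x$ are, after dividing by $x$, exactly the sign conditions in the paper's direct differentiation, so there the two arguments essentially coincide; however, your alternative substitution route, writing $f_3(x)=\mathrm{arcsl}(u)/u$ with $u=x/\sqrt[4]{1+x^4}$ and $f_4(x)=\mathrm{arcslh}(z)/z$ with $z=x/\sqrt[4]{1-x^4}$, is not in the paper and is arguably the cleanest treatment: it exhibits (3) and (4) as reparametrizations of Lemma \ref{yinli6}(1),(2), delivering strict monotonicity and the ranges $(1,\omega)$ and $(0,1)$ in one stroke, including the endpoint limit $f_3(+\infty)=\omega$ without a separate computation. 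The endpoint evaluations for the ranges agree with the paper's in all four cases.
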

\medskip
\begin{proof}
(1) Write $f_{1}(x)=\frac{f_{11}(x)}{f_{12}(x)}$, where $f_{11}(x)=\sqrt{1-x^4}\,{\rm arcsl}\,x$ and $f_{12}(x)=x$.
Then $f_{11}(0^+)=f_{12}(0^+)=0$ and
\begin{equation*}
\frac{f'_{11}(x)}{f'_{12}(x)}=1-\frac{2x^3\,{\rm arcsl}\,x}{\sqrt{1-x^4}},
\end{equation*}
which is strictly decreasing.
Hence $f_{1}$ is strictly decreasing by Lemma \ref{yinli2}.
The limiting value
\begin{equation*}
f_{1}(0^+)=\lim\limits_{x\to 0^{+}}\frac{f_{11}'(x)}{f_{12}'(x)}=1
\end{equation*}
and
$f_{1}(1^-)=0$ is clear.

\medskip

(2) Write $f_{2}(x)=\frac{f_{21}(x)}{f_{22}(x)}$, where $f_{21}(x)=\sqrt{1+x^4}\,{\rm arcslh}\,x$ and $f_{22}(x)=x$.
Then $f_{21}(0^+)=f_{22}(0^+)=0$ and
\begin{equation*}
\frac{f'_{21}(x)}{f'_{22}(x)}=1+\frac{2x^3\,{\rm arcslh}\,x}{\sqrt{1+x^4}}
=1+\frac{2x\,{\rm arcslh}\,x}{\sqrt{1+\frac{1}{x^4}}}\,,
\end{equation*}
which is strictly increasing.
Hence $f_{2}$ is strictly increasing by Lemma \ref{yinli2}.
The limiting value
\begin{equation*}
f_{2}(0^+)=\lim\limits_{x\to 0^{+}}\frac{f_{21}'(x)}{f_{22}'(x)}=1
\end{equation*}
and
$f_{2}(+\infty)=+\infty$ is clear.

\medskip

(3) By differentiation, we have
\begin{equation*}
f'_{3}(x)=\frac{(1+x^4)^{-\frac{3}{4}}\left((1+x^4)^{\frac{1}{4}}-\frac{\mathrm{arctl}\,x}{x}\right)}{x}.
\end{equation*}
By Lemma \ref{yinli6}(3), we get
\begin{equation*}
(1+x^4)^{\frac{1}{4}}>1>\frac{\mathrm{arctl}\,x}{x}.
\end{equation*}
Then $f'_{3}(x)>0$ and hence $f_{3}$ is strictly increasing.
The limiting value $f_{3}(0^+)=1$ follows from Lemma \ref{yinli6}(3)
and $f_{3}(+\infty)=\omega$ is clear.

\medskip

(4) By differentiation, we have
\begin{equation*}
f'_{4}(x)=\frac{(1-x^4)^{-\frac{3}{4}}\left((1-x^4)^{\frac{1}{4}}-\frac{\mathrm{arctlh}\,x}{x}\right)}{x}.
\end{equation*}
By Lemma \ref{yinli6}(4), we get
\begin{equation*}
(1-x^4)^{\frac{1}{4}}<1<\frac{\mathrm{arctlh}\,x}{x}.
\end{equation*}
Then $f'_{4}(x)<0$ and hence $f_{4}$ is strictly decreasing.
The limiting value $f_{4}(0^+)=1$ follows from Lemma \ref{yinli6}(4)
and $f_{4}(1^-)=0$ is clear.
\end{proof}

\medskip

\begin{lemma}\label{yinli5}
(1) The function $h_{1}(x)\equiv\frac{x-\sqrt{1-x^4}\,\mathrm{arcsl}\,x}{\mathrm{arcsl}\,x-x}$ is strictly decreasing on $(0,1)$
with range $(\frac{1}{\omega-1},4)$;\\
(2) The function $h_{2}(x)\equiv\frac{\sqrt{1+x^4}\,\mathrm{arcslh}\,x-x}{x-\mathrm{arcslh}\,x}$ is strictly increasing on $(0,+\infty)$
with range $(4,+\infty)$.\\
(3) The function $h_{3}(x)\equiv\frac{\sqrt{1+x^4}\,\mathrm{arctl}\,x-x}{x-\mathrm{arctl}\,x}$ is strictly increasing on $(0,+\infty)$
with range $(\frac 73,+\infty)$.\\
(4) The function $h_{4}(x)\equiv\frac{x-\sqrt{1-x^4}\,\mathrm{arctlh}\,x}{\mathrm{arctlh}\,x-x}$~ is strictly decreasing on $(0,1)$
with range $(\frac{1}{\sqrt 2 \omega-1},\frac 73)$.
\end{lemma}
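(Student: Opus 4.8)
The plan is to treat each of the four quotients $h_i=h_{i1}/h_{i2}$ ($i=1,2,3,4$) by repeated use of the monotone form of l'H\^opital's rule (Lemma \ref{yinli2}), reducing in every case the relevant derivative ratio to an affine function of one of the already-analyzed functions $f_i$ of Lemma \ref{yinli51}, whose monotonicity is known. In all four cases $h_{i1}(0^+)=h_{i2}(0^+)=0$, so Lemma \ref{yinli2} applies on the appropriate interval with left endpoint $0$; the computations use only the derivative formulas recorded above together with $\frac{\mathrm{d}}{\mathrm{d}x}\sqrt{1\mp x^4}=\mp 2x^3(1\mp x^4)^{-1/2}$.

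For parts (1) and (2) the key step is to clear the surviving radical. In~(1), with $h_{11}(x)=x-\sqrt{1-x^4}\,\mathrm{arcsl}\,x$ and $h_{12}(x)=\mathrm{arcsl}\,x-x$, one finds $h'_{11}(x)=2x^3(1-x^4)^{-1/2}\,\mathrm{arcsl}\,x$ and $h'_{12}(x)=(1-x^4)^{-1/2}-1$, whence
\[
\frac{h'_{11}(x)}{h'_{12}(x)}=\frac{2x^3\,\mathrm{arcsl}\,x}{1-\sqrt{1-x^4}}=\frac{p(x)}{q(x)},
\]
where $p(x)=2x^3\,\mathrm{arcsl}\,x$ and $q(x)=1-\sqrt{1-x^4}$ both vanish at $0^+$. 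One more differentiation gives $p'(x)/q'(x)=3\,\frac{\sqrt{1-x^4}\,\mathrm{arcsl}\,x}{x}+1=3f_1(x)+1$, strictly decreasing by Lemma \ref{yinli51}(1). Since $p/q$ coincides with $h'_{11}/h'_{12}$, two applications of Lemma \ref{yinli2} yield that $h'_{11}/h'_{12}$, and then $h_1$, are strictly decreasing. Part~(2) has the same shape: $h'_{21}(x)/h'_{22}(x)=2x^3\,\mathrm{arcslh}\,x/(\sqrt{1+x^4}-1)$ is again a $0/0$ quotient which, after one further differentiation, collapses to $3f_2(x)+1$, strictly increasing by Lemma \ref{yinli51}(2), so $h_2$ is strictly increasing.

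For parts (3) and (4) the radical does not clear so cleanly, so instead I differentiate numerator and denominator twice. In~(3) one checks $h'_{31}(0^+)=h'_{32}(0^+)=0$ and then computes
\[
\frac{h''_{31}(x)}{h''_{32}(x)}=\frac{2(3+x^4)}{3}\cdot\frac{\sqrt[4]{1+x^4}\,\mathrm{arctl}\,x}{x}+\frac{\sqrt{1+x^4}}{3}=\frac{2(3+x^4)}{3}f_3(x)+\frac{\sqrt{1+x^4}}{3}.
\]
Since $f_3$ is positive and strictly increasing (Lemma \ref{yinli51}(3)) while $(3+x^4)/3$ and $\sqrt{1+x^4}$ are positive and increasing, the right-hand side is strictly increasing (a product of two positive increasing functions plus an increasing one), and two applications of Lemma \ref{yinli2} give that $h_3$ is strictly increasing. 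Part~(4) is treated the same way, with
\[
\frac{h''_{41}(x)}{h''_{42}(x)}=\frac{2(3-x^4)}{3}\cdot\frac{\sqrt[4]{1-x^4}\,\mathrm{arctlh}\,x}{x}+\frac{\sqrt{1-x^4}}{3}=\frac{2(3-x^4)}{3}f_4(x)+\frac{\sqrt{1-x^4}}{3},
\]
where now $f_4$ is positive and strictly decreasing on $(0,1)$ (Lemma \ref{yinli51}(4)) and $(3-x^4)/3$, $\sqrt{1-x^4}$ are positive and decreasing, so the right-hand side is strictly decreasing and hence $h_4$ is strictly decreasing.

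Finally, the ranges follow from the monotonicity together with the one-sided limits. As $x\to 0^+$, $h_i$ has the same limit as the last derivative ratio used, namely $3f_i(0^+)+1=4$ for $i=1,2$ and $2f_i(0^+)+\frac13=\frac73$ for $i=3,4$ (using $f_i(0^+)=1$ from Lemma \ref{yinli51}). At the other endpoint the denominator $h_{i2}$ does not vanish, so $h_i$ is read off directly: $h_1(1^-)=1/(\omega-1)$ and $h_4(1^-)=1/(\sqrt{2}\,\omega-1)$, while $h_2(+\infty)=h_3(+\infty)=+\infty$ because there the numerators are of order $x^2$ whereas the denominators are only of order $x$; combined with the monotonicity this yields precisely the four stated ranges. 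I expect the main obstacle to be purely computational — organizing the differentiations so that the quotients collapse exactly to an affine function of the relevant $f_i$; the only conceptual point is the elementary remark that a product of two positive functions monotone in the same direction is monotone in that direction, which is what makes the right-hand sides displayed in (3) and (4) manifestly monotone.
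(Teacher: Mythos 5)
Your proposal is correct and follows essentially the same route as the paper: the same decompositions $h_i=h_{i1}/h_{i2}$, the same repeated use of the monotone l'H\^opital rule, and the same reductions to $3f_1+1$, $3f_2+1$, $\tfrac{2}{3}(3+x^4)f_3+\tfrac13\sqrt{1+x^4}$ and $\tfrac{2}{3}(3-x^4)f_4+\tfrac13\sqrt{1-x^4}$, with the endpoint limits computed exactly as in the paper.
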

\medskip
\begin{proof}
(1) Write $h_{1}(x)=\frac{h_{11}(x)}{h_{12}(x)}$, where $h_{11}(x)=x-\sqrt{1-x^4}\,\mathrm{arcsl}\,x$ and $h_{12}(x)=\mathrm{arcsl}\,x-x$.
Then $h_{11}(0^+)=h_{12}(0^+)=0$ and
\begin{equation*}
\frac{h'_{11}(x)}{h'_{12}(x)}=\frac{h_{13}(x)}{h_{14}(x)},
\end{equation*}
where $h_{13}(x)=2\,x^3\,{\rm arcsl}\,x$ and $h_{14}(x)=1-\sqrt{1-x^4}$.
Clearly, $h_{13}(0^+)=h_{14}(0^+)=0$.
By differentiation, we get
\begin{equation*}
\frac{h_{13}'(x)}{h_{14}'(x)}=3f_1(x)+1,
\end{equation*}
where $f_1(x)$ is the same as in Lemma \ref{yinli51}(1).
Hence $h_{1}$ is strictly decreasing by Lemma \ref{yinli51}(1) and Lemma \ref{yinli2}.
The limiting value
\begin{equation*}
h_{1}(0^+)=\lim\limits_{x\to 0^{+}}{\left(3f_1(x)+1\right)}=4
\end{equation*}
and
$h_{1}(1^-)=\frac{1}{\omega-1}$ is clear.

\medskip

(2) Write $h_{2}(x)=\frac{h_{21}(x)}{h_{22}(x)}$, where $h_{21}(x)=\sqrt{1+x^4}\,\mathrm{arcslh}\,x-x$ and $h_{22}(x)=x-\mathrm{arcslh}\,x$.
Then $h_{21}(0^+)=h_{22}(0^+)=0$ and
\begin{equation*}
\frac{h'_{21}(x)}{h'_{22}(x)}=\frac{h_{23}(x)}{h_{24}(x)},
\end{equation*}
where $h_{23}(x)=2\,x^3\,{\rm arcslh}\,x$ and $h_{24}(x)=\sqrt{1+x^4}-1$.
Clearly, $h_{23}(0^+)=h_{24}(0^+)=0$.
By differentiation, we get
\begin{equation*}
\frac{h_{23}'(x)}{h_{24}'(x)}=3f_2(x)+1,
\end{equation*}
where $f_2(x)$ is the same as in Lemma \ref{yinli51}(2).
Hence $h_{2}$ is strictly increasing by Lemma \ref{yinli51}(2) and Lemma \ref{yinli2}.
The limiting value
\begin{equation*}
h_{2}(0^+)=\lim\limits_{x\to 0^{+}}{\left(3f_2(x)+1\right)}=4
\end{equation*}
and
$h_{2}(+\infty)=+\infty$ is clear.

\medskip

(3) Write $h_{3}(x)=\frac{h_{31}(x)}{h_{32}(x)}$,  where $h_{31}(x)=\sqrt{1+x^4}\,\mathrm{arctl}\,x-x$ and $h_{32}(x)=x-\mathrm{arctl}\,x$.
Then $h_{31}(0^+)=h_{32}(0^+)=0$ and
\begin{equation*}
\frac{h_{31}'(x)}{h_{32}'(x)}=\frac{2\,x^3\,(1+x^4)^{-\frac{1}{2}}\,{\rm arctl}\,x+(1+x^4)^{-\frac{1}{4}}-1}{1-{(1+x^4)}^{-\frac{3}{4}}}\,.
\end{equation*}
Clearly, $h_{31}'(0^+)=h_{32}'(0^+)=0$.
By differentiation, we get
\begin{equation*}
\frac{h_{31}''(x)}{h_{32}''(x)}=\frac{2}{3}(3+x^4)f_3(x)+\frac{1}{3}\sqrt{1+x^4},
\end{equation*}
where $f_3(x)$ is the same as in Lemma \ref{yinli51}(3).
Hence $h_{3}$ is strictly increasing by Lemma \ref{yinli51}(3) and Lemma \ref{yinli2}.
The limiting value
\begin{equation*}
h_{3}(0^+)=\lim\limits_{x\to 0^{+}}{\frac{h_{31}''(x)}{h_{32}''(x)}}=\frac{7}{3}
\end{equation*}
and
$h_{3}(+\infty)=+\infty$ is clear.

\medskip

(4) Write $h_{4}(x)=\frac{h_{41}(x)}{h_{42}(x)}$, where $h_{41}(x)=x-\sqrt{1-x^4}\,\mathrm{arctlh}\,x$ and $h_{42}(x)=\mathrm{arctlh}\,x-x$.
Then $h_{41}(0^+)=h_{42}(0^+)=0$ and
\begin{equation*}
\frac{h'_{41}(x)}{h'_{42}(x)}=\frac{2\,x^3\,(1-x^4)^{-\frac{1}{2}}\,{\rm arctlh}\,x-(1-x^4)^{-\frac{1}{4}}+1}{{(1-x^4)}^{-\frac{3}{4}}-1}\,.
\end{equation*}
Clearly, $h'_{41}(0^+)=h'_{42}(0^+)=0$.
By differentiation, we get
\begin{equation*}
\frac{h''_{41}(x)}{h''_{42}(x)}=\frac{2}{3}\,(3-x^4)\,f_4(x)+\frac{1}{3}\,\sqrt{1-x^4}\,,
\end{equation*}
where $f_4(x)$ is the same as in Lemma \ref{yinli51}(4).
Hence $h_{4}(x)$ is strictly decreasing by Lemma \ref{yinli51}(4) and Lemma \ref{yinli2}.
The limiting value
\begin{equation*}
h_{4}(0^+)=\lim\limits_{x\to 0^{+}}{\frac{h_{41}''(x)}{h_{42}''(x)}}=\frac{7}{3}
\end{equation*}
and
$h_{4}(1^-)=\frac{1}{\sqrt 2 \omega-1}$ is clear.
\end{proof}

\medskip

\begin{proof}[Proof of Theorem \ref{sf2}]
The inequalities \eqref{arcsl2}~--~\eqref{arctlh2} follow from Lemma \ref{yinli5} with a similar argument in the proof of Theorem \ref{sf1} .
\end{proof}

\medskip

\begin{remark}
In the recent paper \cite{dc}, the authors considered the following problem:
to decide the best possible constants $a_1$ and $b_1$ such that
the inequalities
$$\frac{a_1}{4+\sqrt{1-x^4}}<\frac{{\rm arcsl}\,x}{x}<\frac{b_1}{4+\sqrt{1-x^4}}$$
hold for $0<|x|<1$. Similar problems for several other arc lemniscate functions were also considered in the same paper.
Since the constants in the denominators are fixed, these problems are not the same as ours in this paper.
Our results in Theorem \ref{sf1} and Theorem \ref{sf2} refine the related inequalities in \cite{dc}.
\end{remark}

\subsection*{Acknowledgments}
 This research was supported by National Natural Science Foundation of China (NNSFC) under Grant No.11601485 and No.11771400,
 and Science Foundation of Zhejiang Sci-Tech University (ZSTU) under Grant No.16062023\,-Y.




\end{document}